\let\origsection=\section \def\section{\@ifstar{\origsection*}{\mysection}} 
\def\mysection{\@startsection{section}{1}\z@{.7\linespacing\@plus\linespacing}{.5\linespacing}{\normalfont\scshape\centering\S}}
\renewcommand{\PrintDOI}[1]{\doi{#1}}
\numberwithin{equation}{section}
\theoremstyle{plain}
\newtheorem{thm}{Theorem}[section]
\newtheorem{fact}[thm]{Fact}
\newtheorem{prop}[thm]{Proposition}
\newtheorem{lemma}[thm]{Lemma}
\theoremstyle{definition}
\newtheorem{dfn}[thm]{Definition}
\let\theta=\vartheta
\let\rho=\varrho
\let\phi=\varphi
\let\into=\longrightarrow
\def\NN{\mathbbm N}
\def\QQ{\mathbbm Q}
\let\vn=\varnothing
\def\st{\,|\,}
\def\bl{\bigl(}
\def\br{\bigr)}
\newcommand{\seq}[1]{\accentset{\rightharpoonup}{#1}}
\def\rmlabel{\upshape({\itshape \roman*\,})}
\def\alabel{\upshape({\itshape \alph*\,})}
\let\setminus=\smallsetminus
\begin{document}
\title[The chromatic number of finite type-graphs]{The chromatic number of finite type-graphs}

\author[Christian Avart]{Christian Avart}
\address{Department of Mathematics and Statistics, 
Georgia State University, Atlanta, GA 30303, USA}
\email{cavart@gsu.edu}

\author[Bill Kay]{Bill Kay}
\address{Department of Mathematics and Computer Science, 
Emory University, Atlanta, GA 30322, USA}
\email{bill.w.kay@gmail.com}
\thanks{The second author was supported by NCN grant 2012/06/A/STI/00261 
and by NSF grant {DMS 1301698}}

\author[Christian Reiher]{Christian Reiher}
\address{Fachbereich Mathematik, Universit\"at Hamburg,
  Bundesstra\ss{}e~55, D-20146 Hamburg, Germany}
\email{Christian.Reiher@uni-hamburg.de}

\author[Vojt\v{e}ch R\"{o}dl]{Vojt\v{e}ch R\"{o}dl}
\address{Department of Mathematics and Computer Science, 
Emory University, Atlanta, GA 30322, USA}
\email{rodl@mathcs.emory.edu}
\thanks{The fourth author was supported by NSF grants DMS 1301698 and 1102086.}

\keywords{shift graphs, type-graphs, chromatic number, order types of pairs}
\subjclass[2010]{Primary 05C15, Secondary 05C55}

\begin{abstract}
By a {\it finite type-graph} we mean a graph whose set of vertices is the 
set of all $k$-subsets of $[n]=\{1,2,\ldots, n\}$ for some integers $n\ge k\ge 1$, 
and in which two such sets are adjacent if and only if they realise a certain
order type specified in advance. 
Examples of such graphs have been investigated in a great variety of contexts 
in the literature with particular attention being paid to their chromatic number. 
In recent joint work with Tomasz \L uczak, two of the authors embarked on a systematic
study of the chromatic numbers of such type-graphs, formulated
a general conjecture determining this number up to a multiplicative 
factor, and proved various results of this kind. In this article we fully 
prove this conjecture.
\end{abstract} 

\maketitle

\section{Motivation} \label{sec:intro}
Our goal in this article is to analyse the asymptotic behaviour of the chromatic number 
of certain finite graphs, that are called type-graphs in the sequel. In general the vertex 
set of such a graph is, for some positive integers $n\ge k$, the collection of all $k$-element 
subsets of the set $[n]=\{1,2,\ldots, n\}$. 
Whether two such subsets are to be connected by an edge or not is 
decided solely in terms of the mutual position of their elements or, equivalently,  
it only depends on the order type that this pair of sets realises. Before defining these
type-graphs accurately, we would like to fix some notation concerning order types of
pairs of ordered sets. 
In particular, we shall encode such order types as finite sequences consisting of 
ones, twos, and threes. 
At first sight, allowing rational numbers in the definition that follows might look 
unnecessarily general, but it will turn out to be useful at a later occasion.

\begin{dfn}
Let $X$ and $Y$ be two finite sets of rational numbers with $|X\cup Y|=\ell$
and $X\cup Y =\{z_1,z_2,\ldots,z_\ell\}$, these elements being listed in increasing order.
We say that the {\it order type} of the pair $(X, Y)$ is the sequence 
$\tau=(\tau_1, \ldots, \tau_\ell)$ and set $\tau(X,Y)=\tau$ if  
for every~$i\in[\ell]$ we have
\[
\tau_i=
\begin{cases}
1 &\textnormal{ if } z_i\in X\setminus Y\,, \cr
2 &\textnormal{ if } z_i\in Y\setminus X\,,\cr
3 &\textnormal{ if } z_i\in X\cap Y\,.\cr
\end{cases}
\]
\end{dfn}

For example, given $X = \{1,2,3,5\}$ and $Y = \{3,4,5\}$ we get $\tau(X,Y) = 11323$.
Clearly for any finite sequence $\tau$ consisting of ones, twos, and threes there are 
two finite subsets~$X$ and~$Y$ of $\QQ$ with $\tau=\tau(X, Y)$ and in fact one may 
even find such sets with $X, Y\subseteq\NN$. 

The case most relevant for the definition of type-graphs below is $|X|=|Y|$.
 
\begin{dfn}
Consider two nonnegative integers $k$ and $\ell$. By a {\it type} of {\it width}~$k$
and {\it length}~$\ell$ we mean the order type of a pair $(X, Y)$ with $X, Y\subseteq \QQ$,
$|X|=|Y|=k$, and $|X\cup Y|=\ell$. 
\end{dfn}

So $\tau=123312$ is a type of width $4$ and length $6$ that is realised, e.g., by
$X = \{1,3,4,7\}$ and $Y = \{2,3,4,9\}$. It is not hard to observe that in any
type of width $k$ and length $\ell$ there appear $\ell - k$ ones, $\ell -k$ twos, 
and $2k-\ell$ threes. As a degenerate case we regard the empty sequence $\vn$ as an
{\it empty type} of width and length $0$. A type is said to be {\it trivial} if it
consists of threes only, or in other words if its width equals its length. 

Now we are prepared to define the main objects under consideration in this article.
 
\begin{dfn}
\label{dfn:typegraph}
For a nontrivial type $\tau$ of width $k$ and an integer $n\ge k$, 
the {\it type-graph} $G(n,\tau)$ is the graph with vertex 
set $\binom{[n]}{k}$ in which two vertices $X$ and $Y$ are declared to be adjacent 
if and only if we have $\tau(X,Y)=\tau$ or $\tau(Y, X)=\tau$.
\end{dfn}

Such graphs and their chromatic numbers have been studied in numerous articles. 
For example, it is known that the {\it shift graph} $G(n, 132)$ has chromatic number 
$\lceil \log(n)\rceil$, where the base of the logarithm is $2$. It is straightforward 
to check that these shift graphs are triangle-free, and thus they provide explicit examples
of triangle-free graphs with arbitrarily large chromatic number. More generally, 
Erd\H{o}s and Hajnal~\cite{EH66a} considered the type-graph~$G(n, \sigma_{k})$ with 
\begin{equation}\label{eq:sigmak}
\sigma_k=1\underbrace{3\dots 3}_{k-1}2\,,
\end{equation}
and the infinite analogues of this graph that naturally arise when one replaces the 
finite number~$n$ by an arbitrary cardinal number. 
Concerning the chromatic number of the finite graphs $G(n, \sigma_k)$ they obtained
the following result that we will apply later. 
\enlargethispage{1em}

\begin{thm}[Erd\H{o}s and Hajnal] 
\label{thm:EH}
For any integer $k\ge 2$ we have 
\[
\chi\bl G(n, \sigma_k)\br=\bl 1+o(1)\br\cdot \log_{(k-1)}(n)
\]
as $n$ tends to infinity.
\end{thm}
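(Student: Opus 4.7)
Writing $\chi_k(n) := \chi\bl G(n,\sigma_k)\br$, my proof would establish matching upper and lower bounds by induction on $k \ge 2$, with base case the classical shift-graph identity $\chi_2(n) = \lceil \log_2 n \rceil$ and recursions
\[
\chi_{k-1}(n) \le 2^{\chi_k(n)} \qquad\text{and}\qquad \chi_k(n) \le \chi_{k-1}\bl \lceil \log_2 n \rceil \br + O(1)\,.
\]
Iterating these recursions yields $\chi_k(n) = \bl 1+o(1)\br \log_{(k-1)}(n)$.

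\textbf{Base case $k=2$.} Embed $[n]$ into binary strings of length $m = \lceil \log_2 n \rceil$, and color a pair $\{a,b\}$ with $a<b$ by the position of the most significant bit at which $a$ and $b$ disagree. If $a<b<c$ and the pairs $\{a,b\}, \{b,c\}$ had the same color, then $b$ would have to carry both a $0$ (from $b<c$) and a $1$ (from $a<b$) at that single bit. For the matching lower bound, given a proper $c$-coloring $\phi$ of $G(n,\sigma_2)$, assign $x\mapsto L(x):=\{\phi(\{x,y\}):y>x\}\subseteq[c]$: if $L(x)=L(y)$ for some $x<y$, the color $\phi(\{x,y\})\in L(y)$ is realised by some $z>y$ with $\phi(\{y,z\})=\phi(\{x,y\})$, contradicting the adjacency of $\{x,y\}$ and $\{y,z\}$.

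\textbf{Lower-bound step.} Given a proper $c$-coloring $\phi$ of $G(n,\sigma_k)$, define $L(S) = \{\phi(S\cup\{y\}) : y > \max S\} \subseteq [c]$ for each $(k-1)$-subset $S$ of $[n]$. If $S = \{z_1 < \dots < z_{k-1}\}$ and $T = \{z_2 < \dots < z_k\}$ are $\sigma_{k-1}$-adjacent, then $\alpha := \phi(\{z_1,\dots,z_k\})$ lies in $L(S)$; but for every $y > z_k$ the $k$-sets $\{z_1,\dots,z_k\}$ and $T\cup\{y\}$ are $\sigma_k$-adjacent, so $\phi(T\cup\{y\}) \ne \alpha$ and hence $\alpha \notin L(T)$. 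Thus $L$ is a proper coloring of $G(n,\sigma_{k-1})$ with at most $2^c$ values, yielding the first recursion. Iterating from the base case gives $\chi_k(n) \ge \log_{(k-1)}(n) - O_k(1)$.

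\textbf{Upper-bound step.} For the reverse direction, fix a proper $c$-coloring $\phi'$ of $G(m,\sigma_{k-1})$ with $m = \lceil \log_2 n \rceil$, identify $[n]$ with binary strings of length $m$, and associate with each $X = \{x_1 < \dots < x_k\}$ the tuple $\bl b_1(X), \dots, b_{k-1}(X) \br$, where $b_i(X) = \mathrm{MSB}(x_i \oplus x_{i+1}) \in [m]$. Consecutive $b_i$ are automatically distinct (by the same bit-level argument as the base case), and a $\sigma_k$-adjacency $X \sim Y$ translates directly into the shift $\bl b_2(X),\dots,b_{k-1}(X),b_k \br$. From this tuple one distils a canonical $(k-1)$-subset $S(X) \in \binom{[m]}{k-1}$ together with a constant-sized orientation datum recording how $b_1(X)$ sits relative to $\{b_2(X),\dots,b_{k-1}(X)\}$; the pair $\bl \phi'(S(X)),\,\text{orientation}(X)\br$ then proper-colors $G(n,\sigma_k)$ with $c + O(1)$ colors, giving the second recursion and, upon iteration, $\chi_k(n) \le \log_{(k-1)}(n) + O_k(1)$.

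\textbf{Main obstacle.} The lower-bound reduction is essentially immediate once the ``future palette'' $L(S)$ is set up, as is the base case. The real difficulty lies in the upper-bound construction: the naive multiset $\{b_1(X),\dots,b_{k-1}(X)\}$ need not even have cardinality $k-1$, and when it does, its natural order in $[m]$ may not match the order $\sigma_{k-1}$ imposes. Designing $S(X)$ and the orientation datum so that the shift structure is preserved while only $O(1)$ extra colors are spent per inductive layer — so that the leading coefficient remains $1$ in the iterated logarithm — is the technical heart of the proof; a coarser construction losing a multiplicative constant at each step would only yield $\chi_k(n) = O_k\bl \log_{(k-1)}(n) \br$, not the sharp asymptotic claimed.
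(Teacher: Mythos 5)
The paper does not actually prove Theorem~\ref{thm:EH}; it quotes the result of Erd\H os and Hajnal and points to \cite{DLR95} and \cite{MSW15} for written-out finite versions, so there is no in-paper argument to compare with. Judged on its own merits, your base case and your lower-bound step are both correct and are the standard arguments: the ``future palette'' $L(S)=\{\phi(S\cup\{y\}):y>\max S\}$ is exactly the right object, the check that $L$ proper-colours $G(n,\sigma_{k-1})$ with at most $2^c$ values is sound, and iterating from $\chi_2(n)=\lceil\log_2 n\rceil$ does give $\chi\bl G(n,\sigma_k)\br\ge\log_{(k-1)}(n)$.

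The genuine gap is in the upper-bound step, and you essentially flag it yourself. Two concrete problems. First, the ``canonical $(k-1)$-subset $S(X)$'' is asserted but never constructed, and the obstruction you mention is real: already for $k=4$ the MSB-tuple can repeat with a gap (e.g.\ $x_1,\dots,x_4=000,001,010,011$ gives $b_1=b_3=1$, $b_2=2$), so $\{b_1(X),\dots,b_{k-1}(X)\}$ need not be a $(k-1)$-set, and when it is, sorting it does not turn a $\sigma_k$-shift $X\sim Y$ into a $\sigma_{k-1}$-shift of the sorted sets. Second, even granting $S(X)$ together with a constant-size orientation datum, the colouring $X\mapsto\bl\phi'(S(X)),\,\text{orientation}(X)\br$ takes values in a \emph{product} set, so it uses $c\cdot O(1)$ colours, not $c+O(1)$ as you claim; the ``$+$'' is wrong as written. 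That matters: you yourself observe that a multiplicative loss at each layer only yields $\chi_k(n)=O_k\bl\log_{(k-1)}(n)\br$, so the mechanism you describe cannot deliver the recursion $\chi_k(n)\le\chi_{k-1}(\lceil\log_2 n\rceil)+O(1)$ your plan needs. The sharp upper bound is true, but its proof requires a more careful encoding than the sorted MSB-tuple plus an orientation bit; that construction is exactly what is missing here, and without it the upper half of the statement is unproven.
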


Here for any $t \in \NN$ and any sufficiently large $n\in\NN$, we denote the $t$-fold 
iterated base~2 logarithm of $n$ by $\log_{(t)}(n)$.
Strictly speaking Erd\H{o}s and Hajnal did mainly focus on the case where $n$ is infinite,
see~\cite{EH66a}*{Lemma~2}, but their method of proof applies to finite values of $n$ as 
well. The thus adapted proof may be found with more details in~\cite{DLR95} or~\cite{MSW15}. 
In the latter reference, the alternative language of ordered Ramsey theory is used. 
We note that the infinite case of 
Theorem~\ref{thm:EH} has applications to the computation of infinite Ramsey 
numbers~\cite{EH66a}*{Theorem~1} and refer the reader interested
in further applications of infinite type-graphs to~\cite{Sp57}, \cite{ER60}, \cite{Pr86}, 
and~\cite{KS05}.

Another interesting consequence of Theorem~\ref{thm:EH} is that it provides us with 
explicit examples of graphs having large chromatic number and large odd girth. 
In fact, any odd cycle contained in $G(n, \sigma_k)$ has at least the length $2k+1$. 
This line of thought was substantially continued by Ne{\v{s}}et{\v{r}}il
and R{\"o}dl, who used unions of general type-graphs in some of their early work on 
structural Ramsey theory, see e.g.~\cite{NR76}.

The problem of determining the chromatic number of general finite type-graphs was 
recently approached in joint work of \L uzcak and two of the current authors~\cite{ALR}. 
The last section of that article contains a conjecture, restated as 
Theorem~\ref{thm:main-b} below,  
that predicts this number asymptotically up to a constant multiplicative factor. 
In particular, this conjecture implies that for each nontrivial type 
$\tau$ there exists a nonnegative integer~$\beta$ with~ 
$\chi\bl G(n,\tau)\br= \Theta\bl\log_{(\beta)}(n)\br$ as $n$ tends to infinity. 
When intending to calculate $\beta$ from $\tau$ the first thing one has to do is 
to express $\tau$ as a product of as many other types as possible. 
The next two definitions help us to talk about this process:

\begin{dfn}
Given two finite sequences $\tau=(\tau_1, \ldots,\tau_{\ell})$ and 
$\tau'=(\tau'_1, \ldots,\tau'_{\ell'})$
we write~$\tau\tau'$ for their {\it concatenation} 
$(\tau_1, \ldots, \tau_\ell, \tau'_1, \ldots, \tau'_{\ell'})$.
\end{dfn}

\begin{dfn} 
A nonempty type is said to be {\it irreducible} if it cannot be written as the concatenation of two nonempty types. 
\end{dfn}

It should be clear that each nonempty type $\tau$ can be written in a unique manner as the
concatenation of several irreducible types. In fact, one finds this unique {\it factorisation} 
of~$\tau$ by keeping track of the numbers of ones and twos already encountered while reading~ 
$\tau$ from left to right, and
starting a new factor at every moment where these two numbers are equal. 
As it will turn out, most of our work concerning $\chi\bl G(n,\tau)\br$ addresses the 
irreducible case. 
Once it is solved, the reducible case reduces to that case. 

In the next section, we describe an algorithm which partitions any given irreducible 
type~$\tau$ into so-called blocks. Notice that if $\tau$ is trivial, i.e., a string of threes, 
we must have $\tau=3$ and in this case the number of blocks is going to be $1$. On the other 
hand, any nontrivial irreducible type is going to be partitioned into at least $2$ blocks.

Our main result on irreducible types states:

\begin{thm}
\label{thm:main-a}
If $\tau$ is a nontrivial irreducible type of width $k$ with $b$ blocks, then 
\[
(1+o(1))\log_{(b-2)}\bl\tfrac nk\br\le \chi\bl G(n,\tau)\br\le 
\bl 2^{(b-2)^2}+o(1)\br\log_{(b-2)}(n)\,
\]
and hence
\[
\chi\bl G(n,\tau)\br= \Theta\bl\log_{(b-2)}(n)\br\,.
\]
\end{thm}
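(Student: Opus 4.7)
My plan is to prove the two bounds of Theorem~\ref{thm:main-a} by separate methods. The lower bound will reduce to the Erd\H{o}s--Hajnal estimate for the shift-like graphs $G(m,\sigma_{b-1})$ (Theorem~\ref{thm:EH}), while the upper bound will be obtained by an explicit colouring constructed by induction on the number of blocks~$b$.

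For the lower bound, I would construct a graph homomorphism
\[
\phi\colon G(m,\sigma_{b-1})\longrightarrow G(n,\tau)
\]
with $m=\lfloor n/k\rfloor$. Concretely, partition $[n]$ into $m$ consecutive intervals $I_1<\dots<I_m$ each of size at least~$k$, and for every $(b-1)$-set $S=\{j_1<\dots<j_{b-1}\}\subseteq[m]$ define $\phi(S)$ by spreading the $b$ blocks of $\tau$ across the slots $I_{j_1},\dots,I_{j_{b-1}}$ according to the block decomposition algorithm introduced in the next section. The key verification is that whenever $S$ and $T$ realise $\sigma_{b-1}$ in $[m]$ (in some order), the $k$-sets $\phi(S)$ and $\phi(T)$ realise $\tau$ in $[n]$. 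Once this is established, Theorem~\ref{thm:EH} yields
\[
\chi\bl G(n,\tau)\br \ge \chi\bl G(m,\sigma_{b-1})\br = (1+o(1))\log_{(b-2)}(n/k)\,,
\]
as desired.

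For the upper bound I would proceed by induction on~$b$. The idea is to partition $[n]$ into $m$ consecutive intervals and to colour each $X\in\binom{[n]}{k}$ by combining a ``coarse'' colour, capturing how $X$ distributes across the intervals, with ``local'' information recording the relative positions of the elements of $X$ within each interval. Provided the coarse partition is chosen compatibly with the block structure of $\tau$, two vertices with identical coarse colours cannot form an edge of type~$\tau$, so the problem reduces to colouring a type-graph whose defining type has strictly fewer blocks. Iterating this reduction $b-2$ times, starting from a trivial base case, yields the claimed upper bound, with the accumulated multiplicative losses at each level combining into the factor $2^{(b-2)^2}$.

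The main obstacle will be verifying the compatibility of the homomorphism $\phi$ with the order-type constraints defining edges of $G(n,\tau)$. The block decomposition of $\tau$ is introduced precisely to mirror the shift pattern of $\sigma_{b-1}$ inside $\tau$, so one expects every pair of type $\sigma_{b-1}$ in the coarse graph to lift to a pair of type $\tau$; however, ruling out unintended lifts (to pairs of a different order type) will require a careful case analysis of how two instances of the blocks of $\tau$ can overlap along the interval partition. A secondary difficulty is the bookkeeping for the upper-bound induction, in particular identifying the correct sub-type whose block count drops from $b$ to $b-1$ under the partition step and controlling the constants well enough to recover the stated prefactor.
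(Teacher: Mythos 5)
Your lower bound proposal is essentially the paper's argument: in Proposition~\ref{prop:low} the paper builds a homomorphism $G(n,\sigma_{b-1})\to G(kn,\tau)$ by distributing auxiliary sets $R_i^*\subseteq[k]$ (one per block) across $(b-1)$ length-$k$ intervals, which is the same as your plan of spreading the block decomposition across intervals $I_{j_1},\dots,I_{j_{b-1}}$. The one thing you gloss over --- ruling out ``unintended lifts'' --- is in fact handled automatically here: since $G(n,\tau)$ is defined as the graph of pairs realising $\tau$ in \emph{some} order, to prove $\phi$ is a homomorphism it suffices to check that the image of a $\sigma_{b-1}$-edge realises $\tau$ in the corresponding direction, and the concatenation identity $\tau(R^*_1,R^*_0)\cdots\tau(R^*_b,R^*_{b-1})=B_1\cdots B_b=\tau$ does this directly.

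For the upper bound, however, there is a genuine gap. You propose partitioning $[n]$ into intervals, recording a ``coarse colour'' that captures how $X$ is distributed across intervals, and then claiming this reduces to colouring a type-graph whose type has $b-1$ blocks. Two problems arise. First, the claim ``two vertices with identical coarse colours cannot form an edge of type~$\tau$'' is not true as stated; two $k$-sets can hit the same intervals in the same pattern and still realise $\tau$, so to isolate a colourable residual structure you must control \emph{which} interval each block-boundary falls into, which is subtler than the coarse pattern. Second, and more fundamentally, the residual structure one lands in after such a reduction is not naturally a type-graph $G(m,\tau')$ for a type $\tau'$ with $b-1$ blocks --- the constraints you inherit from $\tau$ and the coarse colour do not assemble into a single order type. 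The paper sidesteps exactly this difficulty by introducing the auxiliary graphs $G_b(n)$ of Definition~\ref{dfn:Gbn}, whose vertices are $(2b-1)$-tuples recording block-endpoints, embedding $G(n,\tau)$ into $G_{b-1}(n)$ via Lemmas~\ref{lem:irr} and~\ref{lem:block} (Theorem~\ref{thm:homo}), and then doing the induction on $b$ inside this auxiliary family. The inductive step (Proposition~\ref{prop:reduce}) is driven by the dyadic ``level'' function $f(x,y)$ of Lemma~\ref{lem:f}, which assigns to a pair $1\le x<y\le 2^n$ the highest binary digit at which they differ; this is what turns a vertex of $G_b(2^n)$ into a vertex of $G_{b-1}(n)$ after refining into at most $2b-6+2^{2b-4}$ parts, and it is precisely this dyadic ingredient --- already present in the classical $\lceil\log n\rceil$-colouring of the shift graph --- that is missing from your proposal. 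Without it, ``iterating the reduction $b-2$ times'' does not have a well-defined base to stand on, and the loss factor $2^{(b-2)^2}$ cannot be accounted for.
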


More generally we shall obtain the following:

\begin{thm}
\label{thm:main-b}
Let $\tau=\rho_1\rho_2\cdot\ldots\cdot\rho_t$ be the factorisation of an arbitrary nontrivial 
type $\tau$ into irreducible types. 
Suppose that $\rho_i$ has $b_i$ blocks for $i\in [t]$, and set $b^*=\max(b_1, \ldots, b_t)$. 
Then we have
\[
\chi\bl G(n,\tau)\br= \Theta\bl\log_{(b^*-2)}(n)\br\,.
\]
\end{thm}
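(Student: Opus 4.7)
My plan is to bootstrap Theorem~\ref{thm:main-b} from Theorem~\ref{thm:main-a} by exhibiting graph homomorphisms in both directions between $G(n,\tau)$ and $G(m,\rho_{i^*})$, where $\rho_{i^*}$ denotes an irreducible factor of $\tau$ achieving the maximum block count $b^*=b_{i^*}$ and $m=\Theta(n)$.

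For the upper bound I will set $s_i=k_1+\cdots+k_i$ and consider the canonical projection $\phi_{i^*}\colon\binom{[n]}{k}\to\binom{[n]}{k_{i^*}}$ that sends $X=\{x_1<\cdots<x_k\}$ to its $i^*$-th consecutive $k_{i^*}$-block $\{x_{s_{i^*-1}+1},\ldots,x_{s_{i^*}}\}$. The uniqueness of the factorisation of $\tau$ implies that whenever $\tau(X,Y)=\tau$, the $i$-th $k_i$-blocks of $X$ and $Y$ jointly realise $\rho_i$, so $\phi_{i^*}$ is a graph homomorphism from $G(n,\tau)$ to $G(n,\rho_{i^*})$. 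Applying Theorem~\ref{thm:main-a} to $\rho_{i^*}$ then yields $\chi\bl G(n,\tau)\br\le\chi\bl G(n,\rho_{i^*})\br=O\bl\log_{(b^*-2)}(n)\br$.

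For the lower bound I will build a homomorphism in the opposite direction, $\psi\colon G(m,\rho_{i^*})\to G(n,\tau)$ with $m=\Theta(n)$; this immediately gives $\chi\bl G(n,\tau)\br\ge\chi\bl G(m,\rho_{i^*})\br=\Omega\bl\log_{(b^*-2)}(n)\br$ via Theorem~\ref{thm:main-a} and finishes the proof. The plan is to partition $[n]$ into consecutive intervals $I_1<\cdots<I_t$ with $|I_{i^*}|=m$ and $|I_j|$ linear in $n$ for $j\ne i^*$, and to set $\psi(u)=u\cup\bigcup_{j\ne i^*}A_j(u)$ for auxiliary maps $A_j\colon\binom{I_{i^*}}{k_{i^*}}\to\binom{I_j}{k_j}$ that must be arranged so that $\tau(u,v)=\rho_{i^*}$ forces $\tau(A_j(u),A_j(v))=\rho_j$ for every $j\ne i^*$.

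The main obstacle will be constructing these $A_j$. A single constant extension fails because it produces a trivial type on the $j$-block, and no binary role-assignment scheme can work since $G(m,\rho_{i^*})$ generally contains odd cycles and admits no $2$-colouring. My planned remedy is an order-preserving encoding of each $u$, based on one or more canonical elements (for instance $\min u$ or consecutive-element pairs), into a scaled realisation of $\rho_j$ placed inside $I_j$; the precise encoding will depend on the structure of $\rho_j$. Since the interlacing of $u$ and $v$ along any $\rho_{i^*}$-edge is completely determined by $\rho_{i^*}$ itself, a careful choice of these encodings will ensure that $(A_j(u),A_j(v))$ realises $\rho_j$ in the required direction, completing the construction of $\psi$ and thus the proof.
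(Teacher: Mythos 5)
Your upper bound is correct and is essentially the paper's: project each vertex $X\in\binom{[n]}{k}$ onto the $i^*$-th consecutive $k_{i^*}$-block, obtaining a homomorphism $G(n,\tau)\to G(n,\rho_{i^*})$, and invoke Theorem~\ref{thm:main-a}. The lower bound, however, is where the real work lies and where your proposal has a genuine gap.

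The paper does \emph{not} attempt the homomorphism $G(m,\rho_{i^*})\to G(n,\tau)$ you propose. Instead it embeds the shift graph $G(n,\sigma_{b^*-1})$ into $G(kn,\tau)$: by Proposition~\ref{prop:low} there is, for each irreducible factor $\rho_i$, a homomorphism $\psi_i\colon G(n,\sigma_{b_i-1})\to G(k_in,\rho_i)$; since a $\sigma_{b^*-1}$-shift on $\{h_1<\dots<h_{b^*}\}$ restricts to a $\sigma_{b_i-1}$-shift on $\{h_1<\dots<h_{b_i}\}$, one simply translates the images $\psi_i(\{h_1,\dots,h_{b_i-1}\})$ into disjoint consecutive intervals (and sends trivial factors $\rho_i=3$ to a constant singleton). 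This interlocks automatically because the \emph{same} shift pattern drives every factor simultaneously. Your plan replaces this source by $G(m,\rho_{i^*})$ and therefore needs, for every $j\ne i^*$, a map $A_j$ such that $\tau(u,v)=\rho_{i^*}$ forces $\tau(A_j(u),A_j(v))=\rho_j$ with the \emph{correct orientation} — i.e.\ an orientation-preserving homomorphism $G(m,\rho_{i^*})\to G(O(n),\rho_j)$. You identify this as the main obstacle but do not overcome it. The specific remedy you float, an encoding based on $\min u$, provably cannot work already for $\rho_{i^*}=\sigma_3=1332$ and $\rho_j=132$: writing $A_j(u)=g(\min u)$ with $g(a)=\{p_a,q_a\}$, the required type $132$ on every $\rho_{i^*}$-edge forces $q_a=p_b$ for all $a<b$, which collapses $g$. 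The ``consecutive-element pairs'' variant can be made to work in that toy case (e.g.\ $A_j(\{a,b,c\})=\{a+b,b+c\}$), but you give neither a general construction for arbitrary $\rho_j$ nor a verification that it produces the exact type $\rho_j$, nor the compatibility of orientations needed for the blocks to concatenate to $\tau$ rather than to some string involving $\rho_j'$. As written the lower-bound half is an outline, not a proof, and the missing construction is precisely the difficulty the paper sidesteps by taking $G(n,\sigma_{b^*-1})$ as the source.
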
 

The rest of this article is structured as follows: In Section~\ref{sec:block} we describe the
block algorithm and thus clarify the meaning of our main results. Then the next two sections 
are dedicated to the proofs of the lower and upper bounds appearing in
Theorem~\ref{thm:main-a}. Finally, in Section~\ref{sec:prod} we will deduce 
Theorem~\ref{thm:main-b} by means of a product argument. 

\section{The block algorithm}
\label{sec:block}

In this section we describe an algorithm partitioning the terms of any irreducible 
type~$\tau$ into blocks of consecutive terms. We will call this algorithm the {\it block algorithm} and the partition  
it produces will be referred to as the {\it block decomposition} of~$\tau$. 

As said above, if~$\tau$ is trivial we have~$\tau=3$ by irreducibility. 
In this special case we regard~$\tau$ as consisting of one block only, namely~$\tau$ itself. 
If~$\tau\ne 3$, then the first digit of~$\tau$ is either a one or a two, because otherwise we 
could write~$\tau=3\rho$ for some 
type $\rho\ne\vn$, contrary to the irreducibility of~$\tau$. We call~$\tau$ {\it primary} if it 
starts with a one and {\it secondary} if it starts with a two. 

Given a subsequence $B$ of a type~$\tau$ that consists of consecutive terms, 
we write~$\textbf{1}(B)$ for the total number of ones and threes occurring in~$B$, 
and~$\textbf{2}(B)$ for the total number of twos and threes in~$B$. 

\enlargethispage{1em}

Now we are ready to explain how the block algorithm is applied to any primary 
irreducible type~$\tau$.
Processing $\tau$ from left to right we are to perform the following steps:

\begin{enumerate}[label=\rmlabel]
\item\label{it:ba1} 
The first block $B_1$ consists of all the initial ones appearing in $\tau$.
\item\label{it:ba2} 
In general, if the block $B_{i}$ has just been constructed, 
the next block $B_{i+1}$ consists of the next consecutive digits of~$\tau$ 
such that~$\textbf{2}(B_{i+1})=\textbf{1}(B_{i})$ 
and such that subject to this condition the block~$B_{i+1}$ is as long as possible.
\item\label{it:ba3} 
The algorithm stops when all the terms of~$\tau$ have been placed in a block.
\end{enumerate}

E.g., for the type $\tau=1121112121212222$ we get $B_1=11$, $B_2=211121$, 
$B_3=212122$, and finally $B_4=22$. One may use appropriate spacing to make 
the outcome of the block algorithm notationally visible and write, for instance,
\[
\tau=11 \ \  211121 \ \ 212122 \ \ 22\,.
\]
Similarly the type $131122311222$ decomposes into 
\[
1 \ \  311 \ \ 22311 \ \ 222\,
\]
and for the type $\sigma_4=13332$, that we have already encountered in~\eqref{eq:sigmak}, 
the algorithm produces
\[
\sigma_4=1 \ \ 3 \ \ 3 \ \ 3 \ \ 2\,. 
\]
\begin{fact} 
\label{fact:basic}
When applied to a primary irreducible type $\tau$ the block algorithm does
indeed provide a factorisation $\tau = B_1 B_2 \cdot \ldots \cdot B_b$
of $\tau$ into some nonempty blocks $B_1, \ldots, B_b$, where~$b\ge 2$. 
Moreover, we have $\textbf{1}(B_b)=0$. 
\end{fact}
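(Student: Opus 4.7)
The plan is to reformulate the algorithm via two cumulative statistics of $\tau$, so that irreducibility translates into a single positivity statement. Let $O_j$, $T_j$, $H_j$ denote the numbers of $1$'s, $2$'s, and $3$'s appearing in $\tau_1\cdots\tau_j$, and set $f(j)=T_j+H_j$, $g(j)=O_j+H_j$, and $D_j=g(j)-f(j)=O_j-T_j$. Write $S_i$ for the right endpoint of block $B_i$, with $S_0=0$, and set $a_i=\textbf{1}(B_i)$, $c_i=\textbf{2}(B_i)$, $\alpha_i=\sum_{j\le i}a_j$, $\gamma_i=\sum_{j\le i}c_j$. Then $\alpha_i=g(S_i)$ and $\gamma_i=f(S_i)$, and the algorithm's defining rule $c_{i+1}=a_i$ is equivalent to $\gamma_{i+1}=\alpha_i$. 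Since $\gamma_1=\alpha_0=0$, induction yields $\gamma_i=\alpha_{i-1}$ for every $i\ge 1$, which combined with $g(S_i)-f(S_i)=D_{S_i}$ produces the key identity $a_i=D_{S_i}$.

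Next I would invoke irreducibility. Because $\tau$ is primary and irreducible, no proper nonempty prefix of $\tau$ has equal $1$- and $2$-counts, so $D_j\ne 0$ for every $0<j<\ell$; combined with $D_1=1>0$ and the fact that $D$ changes by at most $1$ per step, this forces $D_j>0$ on the whole open interval. Suppose inductively that $B_1,\ldots,B_i$ have been produced as nonempty blocks with $S_i<\ell$. Then $f(S_i)=\alpha_{i-1}<\alpha_i=g(S_i)\le g(\ell)=k=f(\ell)$. Since $f$ is non-decreasing with $f(j+1)-f(j)\in\{0,1\}$, it must attain the value $\alpha_i$ at some index in $(S_i,\ell]$, and taking the largest such index to be $S_{i+1}$ produces a nonempty block $B_{i+1}$ with $\textbf{2}(B_{i+1})=\alpha_i-\alpha_{i-1}=a_i$, exactly as required. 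Because $S_i$ is strictly increasing and bounded by $\ell$, the procedure terminates at some step $b$ with $S_b=\ell$.

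Finally, since $\tau$ is primary it is in particular nontrivial, hence contains at least one $2$; as $B_1$ consists only of $1$'s, this forces $b\ge 2$. For the last assertion, evaluating the identities at $S_b=\ell$ gives $\alpha_b=g(\ell)=k=f(\ell)=\gamma_b=\alpha_{b-1}$, whence $a_b=\alpha_b-\alpha_{b-1}=0$, which is precisely $\textbf{1}(B_b)=0$. The only delicate point in the whole argument is ensuring that the ``as long as possible'' step always succeeds, i.e.\ that $f$ actually reaches the target value $\alpha_i$ somewhere beyond $S_i$; this is exactly what the strict positivity of $D_j$ on $(0,\ell)$ guarantees, so irreducibility is used in precisely one place.
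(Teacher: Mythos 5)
Your proof is correct, and the core mechanism --- the telescoping identity $\textbf{2}(B_1\cdots B_i)=\textbf{1}(B_1\cdots B_{i-1})$ combined with the observation that irreducibility forbids equal $\textbf{1}$- and $\textbf{2}$-counts for any proper nonempty prefix --- is the same one the paper uses. Your cumulative-count bookkeeping and the clean identity $a_i=D_{S_i}$ are a pleasant reformulation, while the paper reaches the same conclusion by an indirect argument about the maximal step at which the algorithm could stall; these are essentially the same proof.
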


\begin{proof}
Since $\tau$ starts with a one, rule~\ref{it:ba1} gives us a first block $B_1\ne\vn$.
Now let $i$ be the largest integer for which the block algorithm produces in its first 
$i$ steps some nonempty blocks $B_1, \ldots, B_i$. 
This happens by an initial application of~\ref{it:ba1} followed
by~$i-1$ applications of~\ref{it:ba2}. Let $C$ denote the finite sequence satisfying
\begin{equation}\label{eq:CC}
\tau=B_1\cdot \ldots \cdot B_iC\,. 
\end{equation}
We intend to show that either $C=\vn$ so that the algorithm stops, or 
${0<\textbf{1}(B_i)\le \textbf{2}(C)}$, meaning that the algorithm produces a further nonempty 
block $B_{i+1}$. The latter alternative, however, would contradict the maximality of $i$.

Recall that by construction we have $\textbf{2}(B_1)=0$ and 
$\textbf{1}(B_{j})=\textbf{2}(B_{j+1})$ for all $j\in[i-1]$. 
This yields
\begin{equation}\label{eq:23} 
\textbf{1}(B_1\cdot \ldots \cdot B_{i-1})=\textbf{2}(B_1\cdot \ldots \cdot B_i)\,
\end{equation}
and in combination with~\eqref{eq:CC} and $\textbf{1}(\tau)=\textbf{2}(\tau)$
it follows that ${\textbf{1}(B_i)\le \textbf{1}(B_iC)=\textbf{2}(C)}$.
So if $\textbf{1}(B_i)>0$ we could use \ref{it:ba2} once more to obtain the next 
nonempty block~$B_{i+1}$, contrary to the maximality of $i$.

Thus we must have $\textbf{1}(B_i)=0$ and~\eqref{eq:23} entails that $B_1\cdot \ldots \cdot B_i$ 
is a type. By~\eqref{eq:CC} and the irreducibility of $\tau$ it follows that $C=\vn$, meaning 
that the algorithm stops with a final application of rule~\ref{it:ba3}. Now $b=i$, the
moreover-part was obtained at the beginning of this paragraph, and $b\ge 2$ is clear. 
\end{proof}

So far we have only talked about primary types. For dealing with secondary types
we use the following symmetry: If $\tau$ denotes any finite sequence of ones, twos, and threes,
we write~$\tau'$ for the sequence obtained from $\tau$ by replacing all ones by twos and vice
versa. Evidently if~$\tau$ is a secondary irreducible type, then $\tau'$ is a primary 
irreducible type and thus we already know how to find its block decomposition 
$\tau'=B_1 B_2 \cdot \ldots \cdot B_b$. Now we have $\tau=B_1' B_2' \cdot \ldots \cdot B_b'$ 
and we define this to be the block decomposition of $\tau$. 
In particular, $\tau$ and $\tau'$ have the same numbers of blocks.

Notice that if $\tau(X, Y)=\rho$ holds for some finite sets $X, Y\subseteq\QQ$,
then $\tau(Y, X)=\rho'$ follows. In particular, for any type $\tau$ the two
type-graphs $G(n, \tau)$ and $G(n, \tau')$ are the same and thus it suffices to prove
Theorem~\ref{thm:main-a} for primary $\tau$. 

We conclude this section with two statements concerning irreducible types and the block
algorithm that will be employed in Section~\ref{sec:upper}.

\begin{lemma}
\label{lem:irr}
Suppose that $\tau$ is a primary irreducible type of width $k$ and that $X,Y\subseteq \QQ$ 
are two finite sets with $\tau=\tau(X, Y)$. 
Let $X=\{x_1, \ldots, x_k\}$ and $Y=\{y_1, \ldots, y_k\}$, the
elements being listed in increasing order. Then we have
\begin{enumerate}[label=\alabel]
\item\label{it:IL-a} $x_{i}< y_i$ for all $i\in[k]$
\item\label{it:IL-b} and $x_{i+1}\le y_i$ for all $i\in[k-1]$.
\end{enumerate}
\end{lemma}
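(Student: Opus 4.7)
The plan is to recast both parts as statements about prefix counts of $\tau$ and then extract them from the irreducibility hypothesis. Let $\tau = \tau_1 \cdots \tau_\ell$ and enumerate $X \cup Y$ as $z_1 < \cdots < z_\ell$. For each $j \in \{0, 1, \ldots, \ell\}$, set $f(j) = |X \cap \{z_1, \ldots, z_j\}|$ and $g(j) = |Y \cap \{z_1, \ldots, z_j\}|$; these are exactly $\mathbf{1}(\tau_1 \cdots \tau_j)$ and $\mathbf{2}(\tau_1 \cdots \tau_j)$. By construction $x_i = z_{j^*_i}$ with $j^*_i = \min\{j : f(j) = i\}$, and $y_i = z_{j^{**}_i}$ with $j^{**}_i = \min\{j : g(j) = i\}$. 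Hence \ref{it:IL-a} is equivalent to $j^*_i < j^{**}_i$ for every $i \in [k]$, and \ref{it:IL-b} is equivalent to $j^*_{i+1} \le j^{**}_i$ for every $i \in [k-1]$.

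The core step is to observe that the primary-irreducible hypothesis gives $f(j) > g(j)$ for every $j \in \{1, \ldots, \ell - 1\}$. Indeed, irreducibility means that no proper nonempty prefix of $\tau$ is itself a type, so $\mathbf{1}$- and $\mathbf{2}$-counts of each such prefix must disagree; but the difference $f(j) - g(j)$ starts at $1$ (since $\tau_1 = 1$) and changes by $\pm 1$ or $0$ at each step, so, being forbidden from passing through $0$, it remains strictly positive throughout.

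From here the lemma follows quickly. For \ref{it:IL-b}: given $i \in [k-1]$ and $j_1 = j^{**}_i$, the relation $g(j_1) = i < k = g(\ell)$ forces $j_1 \le \ell - 1$, so $f(j_1) > g(j_1) = i$, whence $j^*_{i+1} \le j_1$ as desired. For \ref{it:IL-a}: the analogous argument handles every $i \le k - 1$ at once; the one case that needs a bit of care is $i = k$, where we must verify $j^*_k < \ell$. I expect this boundary case to be the only subtle point, but a short contradiction resolves it: if $j^*_k = \ell$, then $\tau_\ell$ must increment $f$ (so $\tau_\ell \in \{1, 3\}$) while $g(\ell) = k$ forces $\tau_\ell \in \{2, 3\}$, giving $\tau_\ell = 3$ and consequently $f(\ell - 1) = g(\ell - 1) = k - 1$, contradicting the strict inequality established in the previous paragraph. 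Once $j^*_k < \ell$ is known, $g(j^*_k) < f(j^*_k) = k$ completes \ref{it:IL-a}.
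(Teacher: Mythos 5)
Your proof is correct and rests on the same core observation as the paper's: irreducibility forces every proper nonempty prefix of $\tau$ to have unequal $\mathbf{1}$- and $\mathbf{2}$-counts, which (given $\tau_1=1$ and the fact that the running difference moves by at most one) pins $f(j)>g(j)$ on the whole interior $1\le j\le\ell-1$. The paper uses the very same prefix-count observation but applies it locally and proves~\ref{it:IL-a} by induction, thereby sidestepping your separate boundary case at $i=k$. In that boundary case, the step ``$g(\ell)=k$ forces $\tau_\ell\in\{2,3\}$'' does not stand on its own; rather, $\tau_\ell=1$ would give $g(\ell-1)=k>k-1=f(\ell-1)$, so both subcases $\tau_\ell\in\{1,3\}$ are excluded by the same strict inequality you already established (or, more directly, $f(\ell-1)=k-1$ together with $g(\ell-1)\le k-2$ and $g(\ell)=k$ would force $g$ to jump by two).
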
 

\begin{proof}
Let $\tau=(\tau_1, \ldots, \tau_\ell)$, where $\ell$ denotes the length of $\tau$. We contend
that 
\begin{equation}\label{eq:22} 
\text{if  $i\in [k-1]$ and $x_i\le y_i$, then $x_{i+1}\le y_i$}. 
\end{equation}
To show this, let $y_i$ be the $m$-th element in the increasing enumeration of $X\cup Y$.
In view of $1\le i<k$ we have $1\le m<\ell$ and thus $(\tau_1, \ldots, \tau_m)$ cannot be 
a type due to the irreducibility of $\tau$.
This  in turn yields $|X\cap (-\infty, y_i]|\ne |Y\cap (-\infty, y_i]|=i$. 
But assuming $x_i\le y_i$ the number $|X\cap (-\infty, y_i]|$ is at least $i$, so that 
altogether it must be at least $i+1$, which means that $x_{i+1}\le y_i$. 
This proves~\eqref{eq:22}. 

Next we show \ref{it:IL-a} by induction on $i$. The base case $x_1<y_1$ follows from $\tau$
being primary. For the induction step we suppose that $x_i<y_i$ holds for some $i<k$. 
Then~\eqref{eq:22} entails $x_{i+1}\le y_i<y_{i+1}$, which concludes the argument.

Finally~\ref{it:IL-b} is an immediate consequence of~\eqref{eq:22} and~\ref{it:IL-a}.
\end{proof}  

We now come to the only place in the proof of Theorem~\ref{thm:main-a} where the demand 
from the second rule of the block algorithm that the blocks should end with as many ones 
as possible is utilised. The purpose of the following lemma is that, roughly speaking, it 
tells us how the ``blocks'' of two finite sets $X$ and $Y$ realising an irreducible type 
$\tau$ overlap each other. This will be useful in Subsection~\ref{subsec:Gbn} for embedding
$G(n, \tau)$ into an auxiliary graph whose chromatic number is easier to bound from above. 

\goodbreak

\begin{lemma}
\label{lem:block}
Let $\tau=B_1 B_2 \cdot \ldots \cdot B_b$ be the block decomposition of some 
primary irreducible type whose width is $k$ 
and set $s(i)=\textbf{2}(B_1\cdot\ldots\cdot B_{i})$ for all $i\in [b]$. 
Then for any two sets~$X$ and $Y$ satisfying $\tau=\tau(X, Y)$, say
$X=\{x_1, \ldots, x_k\}$ and $Y=\{y_1, \ldots, y_k\}$ with the elements listed in increasing
order, we have $x_{s(i+1)}< y_{s(i)+1}\le x_{s(i+1)+1}$ for all $i\in [b-2]$.
\end{lemma}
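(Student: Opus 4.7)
Fix $i\in[b-2]$. The plan is to unpack what $s(i)$ counts in terms of the elements of $X$ and $Y$, and then handle the two inequalities separately. Let $z_1<z_2<\ldots<z_\ell$ denote the increasing enumeration of $X\cup Y$ and write $\ell_i=|B_1\cdot\ldots\cdot B_i|$. By the definition of the $\textbf{2}$-operator, exactly $s(i)$ of the elements $z_1,\ldots,z_{\ell_i}$ lie in $Y$; since $Y$ is linearly ordered, these must be $y_1,\ldots,y_{s(i)}$. Applying identity~\eqref{eq:23} from the proof of Fact~\ref{fact:basic} in the form $\textbf{1}(B_1\cdot\ldots\cdot B_i)=\textbf{2}(B_1\cdot\ldots\cdot B_{i+1})=s(i+1)$, one sees similarly that the $X$-elements among $z_1,\ldots,z_{\ell_i}$ are precisely $x_1,\ldots,x_{s(i+1)}$.

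The first inequality $x_{s(i+1)}<y_{s(i)+1}$ is then a bookkeeping observation: $x_{s(i+1)}$ appears somewhere among $z_1,\ldots,z_{\ell_i}$ while $y_{s(i)+1}$ does not, so the two cannot coincide and must be strictly ordered as claimed.

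For the second inequality $y_{s(i)+1}\le x_{s(i+1)+1}$ I would argue by contradiction. If instead $x_{s(i+1)+1}<y_{s(i)+1}$, then $x_{s(i+1)+1}$ is the smallest element of $X\cup Y$ strictly greater than $z_{\ell_i}$, so $z_{\ell_i+1}=x_{s(i+1)+1}$. Moreover $x_{s(i+1)+1}\notin Y$, since every $y_j$ with $j\le s(i)$ already appears among $z_1,\ldots,z_{\ell_i}$ and every $y_j$ with $j\ge s(i)+1$ satisfies $y_j\ge y_{s(i)+1}>x_{s(i+1)+1}$. Hence $x_{s(i+1)+1}\in X\setminus Y$ and the first digit of $B_{i+1}$ is a $1$. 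This is where the ``as long as possible'' clause of rule~\ref{it:ba2} enters: since $i\ge 1$, the block $B_i$ was produced by~\ref{it:ba2} subject to the constraint $\textbf{2}(B_i)=\textbf{1}(B_{i-1})$, and appending the subsequent digit $1$ to $B_i$ would preserve $\textbf{2}(B_i)$ while strictly lengthening it, contradicting the maximality of $B_i$.

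I expect the only real subtlety to be this maximality step, as it is the sole point where the tie-breaking rule of the block algorithm is used; correspondingly, the restriction $i\ge 1$ implicit in $[b-2]$ reflects the fact that $B_1$ is produced by rule~\ref{it:ba1} rather than~\ref{it:ba2}. The remainder is combinatorial bookkeeping based on the meanings of $\textbf{1}$ and $\textbf{2}$ together with the identity~\eqref{eq:23}.
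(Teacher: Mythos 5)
Your proof is correct and follows essentially the same approach as the paper: you identify, by counting the ones, twos, and threes in $B_1\cdot\ldots\cdot B_i$, which initial segments of $X$ and $Y$ sit among the first $\ell_i$ elements of $X\cup Y$, and then invoke the greediness of the block algorithm to conclude that the digit immediately after $B_i$ (the first digit of $B_{i+1}$) cannot be a one. The paper derives the second inequality directly (the first digit of $B_{i+1}$ being a two or a three gives $z_{\ell_i+1}\in Y$, hence $y_{s(i)+1}=z_{\ell_i+1}\le x_{s(i+1)+1}$), whereas you argue by contradiction, but the content is the same. One small inaccuracy: for $i=1$, the block $B_1$ is produced by rule~\ref{it:ba1}, not rule~\ref{it:ba2}, so the reason a one cannot immediately follow $B_1$ is the phrase ``all the initial ones'' in rule~\ref{it:ba1}, not the maximality clause of rule~\ref{it:ba2}. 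Your closing remark about the role of $i\ge 1$ is correspondingly a bit off: the case $i=1$ still leans on the greediness of $B_1$'s construction, just via a different rule.
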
 
 
\begin{proof}
Let $X\cup Y=\{z_1, \ldots, z_\ell\}$, the elements again being listed in increasing order.
Fix any~$i\in [b-2]$ and set $\beta=\sum_{j=1}^i|B_j|$. By rule~\ref{it:ba2} of the block 
algorithm the block $B_{i+1}$ cannot start with a one and thus we have $z_{\beta+1}\in Y$.
In combination with 
\[
s(i)=\textbf{2}(B_1\cdot\ldots\cdot B_{i})=|Y\cap (-\infty, z_\beta]|
\]
this yields
\begin{equation}\label{eq:24} 
y_{s(i)+1}=z_{\beta+1}\,.
\end{equation}
Similarly we have
\[
s(i+1)=\textbf{2}(B_1\cdot\ldots\cdot B_{i+1})=\textbf{1}(B_1\cdot\ldots\cdot B_{i})
=|X\cap (-\infty, z_\beta]|
\]
and thus $x_{s(i+1)}\le z_\beta$ as well as $z_{\beta+1}\le x_{s(i+1)+1}$. 
The desired conclusion follows from these two estimates and~\eqref{eq:24}. 
\end{proof} 

\section{The lower bound -- uncolourability}
\label{sec:lower}

In this section we shall prove the lower bound from Theorem~\ref{thm:main-a}. So we intend to
show that a certain graph $G(n, \tau)$ cannot be coloured with a certain ``small'' number 
of colours. 
Recall that for any graph $H$ and any natural number $r$, the statement $\chi(H)>r$ means the 
same as saying that there is no graph homomorphism from $H$ to the $r$-clique $K_r$. 
Thus one strategy to prove such an uncolourability statement is to exhibit a homomorphism from 
some auxiliary graph $G$ to~$H$, with $\chi(G)>r$ already being known. 
So in the light of Theorem~\ref{thm:EH} our task reduces to:

\begin{prop}
\label{prop:low}
For every nontrivial irreducible type $\tau$ of width $k$ with $b$ blocks and every 
integer $n\ge b$
there is a graph homomorphism
\[
\phi\colon G(n, \sigma_{b-1}) \longrightarrow G(kn, \tau)\,.
\] 
\end{prop}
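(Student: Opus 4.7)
Proof proposal.

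The plan is to construct an explicit graph homomorphism $\phi\colon G(n,\sigma_{b-1})\to G(kn,\tau)$ by expanding each $(b-1)$-element subset $A\subseteq[n]$ into a $k$-element subset of $[kn]$ in a structured way. Regarding $[kn]$ as split into $n$ consecutive slots of size $k$ (slot $a$ being $\{(a-1)k+1,\ldots,ak\}$), I would pick in advance subsets $\pi_1,\ldots,\pi_{b-1}\subseteq [k]$ with $|\pi_j|=\textbf{1}(B_j)$ and set
\[
\phi(A)=\bigcup_{j=1}^{b-1}\bigl\{(a_j-1)k+p\colon p\in\pi_j\bigr\}
\]
for every $A=\{a_1<\cdots<a_{b-1}\}\in\binom{[n]}{b-1}$. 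Since $\sum_j\textbf{1}(B_j)=\textbf{1}(\tau)=k$, this indeed produces a $k$-subset of $[kn]$.

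Given any $\sigma_{b-1}$-edge $\{A,A'\}$, enumerate $A\cup A'=\{a_1<\cdots<a_b\}$ so that $A=\{a_1,\ldots,a_{b-1}\}$ and $A'=\{a_2,\ldots,a_b\}$. Using the convention $\pi_0=\pi_b=\vn$, one checks that within slot $a_i$ the set $\phi(A)\cup\phi(A')$ occupies exactly the positions $\pi_i\cup\pi_{i-1}\subseteq [k]$, with $\pi_i$ coming from $\phi(A)$ and $\pi_{i-1}$ from $\phi(A')$. Labelling each $p\in\pi_i\setminus\pi_{i-1}$ by $1$, each $p\in\pi_{i-1}\setminus\pi_i$ by $2$, and each $p\in\pi_i\cap\pi_{i-1}$ by $3$ describes the type realised in that slot; concatenating over $i=1,\ldots,b$ yields $\tau(\phi(A),\phi(A'))=\tau$ provided that, for every $i\in [b]$, the sorted union $\pi_i\cup\pi_{i-1}$ with these labels spells out exactly the block $B_i$.

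The real work is therefore to produce such a family of positions. For this I would fix any realisation of $\tau$ by $X^*=\{x^*_1<\cdots<x^*_k\}$ and $Y^*=\{y^*_1<\cdots<y^*_k\}$ in $\QQ$, denote by $B_{i(j)}$ the block containing $x^*_j$, and search for a common slot assignment $P\colon [k]\to [k]$ with the property that $P(j)$ is simultaneously the slot position of $x^*_j$ inside $B_{i(j)}$ and of $y^*_j$ inside the next block $B_{i(j)+1}$; one then lets $\pi_i=\{P(j)\colon s(i)<j\leq s(i+1)\}$, with $s$ as in Lemma~\ref{lem:block}. A shared element $x^*_j=y^*_{j'}$ then automatically acquires a common slot position and contributes the correct three in the appropriate block, while the bound $|B_i|=s(i+1)-s(i-1)-\theta(B_i)\leq k$ (immediate from $s(i-1)\geq 0$ and $s(i+1)\leq k$, with $\theta(B_i)$ denoting the number of threes in $B_i$) guarantees that every block fits inside a single slot.

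The main obstacle is to exhibit a coherent $P$. I would build it inductively on the blocks: assuming that $P$ has been defined on $\{1,\ldots,s(i)\}$ in such a way that $\pi_1,\ldots,\pi_{i-1}$ correctly realise $B_1,\ldots,B_{i-1}$, I would extend $P$ to the next $X$-index interval $\{s(i)+1,\ldots,s(i+1)\}$ by slotting the fresh $X$-only positions into the gaps of $[k]$ left by the already placed positions of $\pi_{i-1}$, following the interleaving pattern dictated by $B_i$. The crux is that such an interleaving is always feasible, which I expect to derive by combining the staircase inequality $x^*_{j+1}\leq y^*_j$ from Lemma~\ref{lem:irr} with the maximality clause of the block algorithm: together they prevent the previously chosen positions from ever obstructing the required insertions, and this is where I anticipate most of the delicate bookkeeping to lie.
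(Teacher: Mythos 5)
Your high-level strategy matches the paper's: choose subsets $\pi_1,\ldots,\pi_{b-1}\subseteq[k]$ (the paper's $R^*_i$) so that, with $\pi_0=\pi_b=\vn$, each consecutive pair satisfies $\tau(\pi_i,\pi_{i-1})=B_i$; the slot formula you write down then does define a homomorphism, by exactly the concatenation argument you describe. The gap is precisely where you acknowledge it: the $\pi_i$ are never actually produced. Your plan --- attach a slot assignment $P$ to a fixed realisation $X^*,Y^*$ and extend it block by block inside $[k]$ --- leaves two serious difficulties unaddressed. First, there is no reason the ``natural'' slot position of $x^*_j$ within its block should agree with that of $y^*_j$ within the next block (the two blocks have different lengths and different $1/2/3$ patterns), so $P$ would have to be chosen artificially rather than read off the realisation, and the claim that a shared element $x^*_j=y^*_{j'}$ ``automatically acquires a common slot position'' does not follow since $j\neq j'$. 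Second, once positions are being assigned inside $[k]$ step by step, the inductive step can genuinely be obstructed by earlier choices: if, say, $B_i$ demands a one strictly between two twos but $\pi_{i-1}$ was already placed in two adjacent integers, there is no room. Proving that earlier choices never corner you requires a nontrivial invariant; this is the heart of the construction, not mere bookkeeping.

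The paper sidesteps all of this by decoupling the order-theoretic construction from the placement in $[k]$. It builds $R_0=\vn,R_1,\ldots,R_b\subseteq\QQ$ recursively, using the density of $\QQ$ to guarantee (Fact~\ref{fact:getR}) that at every step one can insert ``ones'' so that $\tau(R_i,R_{i-1})=B_i$. Only after all the $R_i$ exist does one observe that $\sum_i|R_i|=\textbf{2}(\tau)=k$, so $\bigcup_i R_i$ is covered by some $k$ rationals $\alpha_1<\cdots<\alpha_k$, and the single order-preserving relabelling $\alpha_j\mapsto j$ produces the $R^*_i\subseteq[k]$ with all type relations preserved at once. This one-shot pull-back eliminates the overcrowding problem entirely and renders the delicate block-by-block bookkeeping you anticipate unnecessary.
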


For the construction of such a homomorphism, we will make use of the following

\begin{fact}
\label{fact:getR}
If $B$ denotes a finite sequence of ones, twos, and threes, and $Y\subseteq \QQ$
has size~$\textbf{2}(B)$, then there is a set $X\subseteq \QQ$ with $\tau(X, Y)=B$.
\end{fact}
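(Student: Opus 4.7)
The plan is to place the elements of $Y$ at the positions of $B$ labelled $2$ or $3$, and then invoke the density of $\QQ$ to fill in rationals at the positions labelled $1$. Write $B=(B_1,\ldots,B_\ell)$ and $Y=\{y_1<y_2<\cdots <y_m\}$, where by hypothesis $m=\textbf{2}(B)$. I would construct a strictly increasing sequence $z_1<z_2<\cdots <z_\ell$ in $\QQ$ as follows. For each index $i$ with $B_i\in\{2,3\}$, set $z_i=y_{j(i)}$, where $j(i)$ counts the number of twos and threes in the prefix $B_1,\ldots,B_i$; since the total count of such positions is $m$, each element of~$Y$ is used exactly once, and these ``anchor'' values already appear in the correct relative order.

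The remaining positions, those with $B_i=1$, then have to be filled in with rationals that lie strictly between the nearest preceding anchor (or are unbounded below, if there is none) and the nearest following anchor (or are unbounded above), and that are strictly interleaved amongst themselves. Because $\QQ$ is dense and unbounded on both sides, one can always choose such rationals: proceed from left to right through the maximal runs of ones, and in each run pick finitely many rationals strictly between the two surrounding anchor values, in increasing order. Finally, set
\[
X=\{z_i:B_i\in\{1,3\}\}\,.
\]

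Verification is then mechanical: by construction $X\cup Y=\{z_1,\ldots,z_\ell\}$ listed in increasing order, and the index $i$ satisfies $z_i\in X\setminus Y$, $z_i\in Y\setminus X$, or $z_i\in X\cap Y$ precisely when $B_i$ equals $1$, $2$, or $3$ respectively. Hence $\tau(X,Y)=B$. No step presents a genuine obstacle; the one point to check carefully is that within each maximal run of ones the inserted rationals fit strictly between the surrounding anchors and remain in increasing order, and this is an immediate consequence of the density of $\QQ$ and its unboundedness on both sides.
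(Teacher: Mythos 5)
Your proof is correct and complete. The construction is sound: placing the elements of $Y$ at the positions of $B$ labelled $2$ or $3$ (in order), interpolating rationals at the positions labelled $1$ using density and unboundedness of $\QQ$, and then reading off $X$ from the positions labelled $1$ or $3$ visibly yields a strictly increasing sequence $z_1<\dots<z_\ell$ with the desired membership pattern, hence $\tau(X,Y)=B$. All edge cases (runs of ones before the first anchor, after the last anchor, and the degenerate case $Y=\varnothing$) are handled by the unboundedness of $\QQ$ as you note.

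The paper does not give a proof at all beyond the remark that the fact ``can easily be shown by induction on the number of ones appearing in $B$,'' leaving details to the reader. Your argument replaces that induction with a single direct construction: rather than inserting one element of $X\setminus Y$ at a time into a partially built configuration, you anchor all of $Y$ at once and fill every gap in one pass. The two approaches are morally identical --- both ultimately rest on the density and unboundedness of $\QQ$ to interpolate the $X$-only positions --- but yours is more explicit and avoids the bookkeeping of an induction hypothesis. Either is an acceptable way to establish the fact; yours has the advantage of making the resulting $X$ concretely visible.
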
 

This can easily be shown by induction on the number of ones appearing in $B$ and we leave the 
details to the reader.

\begin{proof}[Proof of Proposition~\ref{prop:low}]
As said above we may assume that $\tau$ is primary. Let 
\[
\tau=B_1 B_2 \cdot \ldots \cdot B_b
\]
be the block 
decomposition of $\tau$. We commence by defining recursively an auxiliary sequence 
$R_0, R_1, \ldots, R_b$ of finite subsets of $\QQ$ with 
\begin{equation}\label{eq:dfnR}
|R_{i-1}|=\textbf{2}(B_i) 
\quad \text{ for all } i\in[b]\,.
\end{equation}
Since $B_1$ consists exclusively of ones, such a sequence needs to start with $R_0=\vn$. 
Once~$R_{i-1}$ has been defined for some $i\in [b]$, we use
Fact~\ref{fact:getR} to obtain a set $R_i\subseteq \QQ$ satisfying $\tau(R_i, R_{i-1})=B_i$.
Notice that for $i<b$ this yields $|R_i|=\textbf{1}(B_i)=\textbf{2}(B_{i+1})$, so that the 
construction may be continued. We also get $|R_b|=\textbf{1}(B_b)=0$ and hence $R_b=\vn$
from Fact~\ref{fact:basic}.

In view of~\eqref{eq:dfnR} we have
\begin{equation}\label{eq:sumR}
\sum_{i=0}^{b-1}|R_i|=\sum_{i=1}^{b}\textbf{2}(B_i)=\textbf{2}(\tau)=k
\end{equation}
and thus there exist $k$ rational numbers $\alpha_1< \ldots< \alpha_k$ with
\[
\bigcup_{0\le i<b}R_i\subseteq\{\alpha_1, \ldots, \alpha_k\}\,.
\]
Pulling this situation back to $[k]$ we define $R^*_i=\{j\in [k]\,|\,\alpha_j\in R_i\}$ 
for all $i\in[b-1]$ as well as $R_0^*=R_b^*=\vn$. The main properties of these sets are
\begin{equation}\label{eq:Rprop}
R_i^*\subseteq [k] 
\quad\text{ and }\quad
\tau(R^*_i, R^*_{i-1})=B_i
\quad\text{ for all } i\in [b]\,.
\end{equation}
Now we are ready to define the requested map
\[
\phi\colon\binom{[n]}{b-1}\longrightarrow\binom{[kn]}{k}\,.
\]
Given any integers $h_i$ for $i\in [b-1]$ with $1\le h_1<\ldots<h_{b-1}\le n$
we set
\[
\phi\bl\{h_1, \ldots, h_{b-1}\}\br=
\bigcup_{i\in [b-1]}\bigl\{(h_i-1)k+j\,|\,j\in R_i^*\bigr\}\,.
\]
Due to $R^*_i\subseteq [k]$ the right-hand side of this formula is indeed a subset of $[kn]$
and by~\eqref{eq:sumR} its size is $k$. It remains to check that $\phi$ maps edges of 
$G(n, \sigma_{b-1})$ to edges of $G(kn, \tau)$. To this end let any integers $h_i$ for 
$i\in [b]$ with $1\le h_1<\ldots<h_{b}\le n$ be given. Then by~\eqref{eq:Rprop} we have
\begin{align*}
\tau\bl \phi\bl\{h_1, \ldots, h_{b-1}\}\br, \phi\bl\{h_2, \ldots, h_{b}\}\br\br 
&= \tau(R_1^*, R_0^*)\cdot \tau(R_2^*, R_1^*)\cdot\ldots\cdot \tau(R_b^*, R_{b-1}^*) \\
&= B_1B_2\cdot \ldots \cdot B_b=\tau\,,
\end{align*}
as desired.
\end{proof}

\section{The upper bound -- constructing colourings}
\label{sec:upper}

This entire section is dedicated to the proof of the upper bound from Theorem~\ref{thm:main-a}.
The strategy we use is to embed the type-graph $G(n,\tau)$ into 
some other graph $G_{b-1}(n)$ that depends solely on~$b$ and $n$ but  
not on $\tau$ itself. Thereby the task we are to perform gets reduced to the 
problem of colouring these auxiliary graphs with ``few'' colours and it seems that this 
new problem is more susceptible to an inductive treatment than the old one.

\subsection{Embedding type-graphs}
\label{subsec:Gbn}

We begin by defining the auxiliary graphs $G_b(n)$ mentioned above.

\begin{dfn}\label{dfn:Gbn} 
For any positive integers $b$ and $n$ we set
\[
W_b(n)=\{(x_1, \ldots, x_{2b-1})\st 1\le x_1\le x_2\le \ldots\le x_{2b-1}\le n\}\
\]
and 
\[
V_b(n)=\{(x_1, \ldots, x_{2b-1})\in W_b(n)\st x_1< x_3 < \ldots < x_{2b-1}\}\,.
\]
By $G_b(n)$ we mean the graph with vertex set $V_b(n)$ in which an
unordered pair ${e\subseteq V_b(n)}$ is declared to be an edge if we can write
$e=\{\seq{x}, \seq{y}\}$, $\seq{x}=(x_1, \ldots, x_{2b-1})$, and 
${\seq{y}=(y_1, \ldots, y_{2b-1})}$ such that
\begin{enumerate}[label=\rmlabel]
\item\label{it:Gbn1} $x_1<y_1\le x_3<y_3\le\ldots\le x_{2b-1}<y_{2b-1}$ 
\item\label{it:Gbn2} and $x_{j+1}\le y_j$ for $j\in[2b-2]$.
\end{enumerate}
\end{dfn}

It should perhaps be observed that the conditions \ref{it:Gbn1} and \ref{it:Gbn2} 
from this definition do not determine uniquely how the elements of the 
multiset $\{x_1, \ldots, x_{2b-1}\}\cup\{y_1, \ldots, y_{2b-1}\}$ are ordered.
This makes it more plausible, of course, that many type-graphs embed homomorphically 
into $G_b(n)$ and in fact we have

\begin{thm}\label{thm:homo}
For any nontrivial irreducible type $\tau$ with $b\ge 2$ blocks and every positive integer $n$ 
there is a graph homomorphism $\phi\colon G(n, \tau)\into G_{b-1}(n)$.
\end{thm}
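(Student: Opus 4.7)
The plan is to define $\phi$ explicitly so that it records, for each vertex $X = \{x_1 < \ldots < x_k\}$ of $G(n,\tau)$, the $X$-coordinates at and adjacent to every block boundary of $\tau$. By the symmetry $G(n,\tau) = G(n,\tau')$ noted in Section~\ref{sec:block}, it suffices to treat the case that $\tau$ is primary, so I would let $\tau = B_1B_2\cdots B_b$ be its block decomposition and $s(i) = \textbf{2}(B_1\cdots B_i)$ as in Lemma~\ref{lem:block}. Then $s(1) = 0$, $s(b) = k$, and $s(i+1) - s(i) = \textbf{1}(B_i) > 0$ for all $i \in [b-1]$ by the block rule together with Fact~\ref{fact:basic}.

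For each $X = \{x_1 < \ldots < x_k\} \in \binom{[n]}{k}$ I would set
\[
\phi(X) = \bl x_1,\, x_{s(2)},\, x_{s(2)+1},\, x_{s(3)},\, x_{s(3)+1},\, \ldots,\, x_{s(b-1)},\, x_{s(b-1)+1}\br,
\]
or compactly, $\phi(X)_{2i-1} = x_{s(i)+1}$ for $i \in [b-1]$ and $\phi(X)_{2i} = x_{s(i+1)}$ for $i \in [b-2]$. Its length is $2b-3$, and the inequalities $s(i) < s(i)+1 \leq s(i+1)$ give weak monotonicity of the whole sequence together with strict monotonicity of its odd-indexed subsequence, so indeed $\phi(X) \in V_{b-1}(n)$.

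For the edge preservation, by symmetry of both $G(n,\tau)$ and Definition~\ref{dfn:Gbn} it is enough to handle pairs with $\tau(X,Y) = \tau$. Writing $\xi = \phi(X)$ and $\eta = \phi(Y)$, the four families of inequalities required by~\ref{it:Gbn1} and~\ref{it:Gbn2} of Definition~\ref{dfn:Gbn} become: $\xi_{2i-1} < \eta_{2i-1}$ for $i \in [b-1]$, which reads $x_{s(i)+1} < y_{s(i)+1}$ and is immediate from Lemma~\ref{lem:irr}\ref{it:IL-a}; $\eta_{2i-1} \leq \xi_{2i+1}$ for $i \in [b-2]$, which reads $y_{s(i)+1} \leq x_{s(i+1)+1}$ and is the right half of Lemma~\ref{lem:block}; $\xi_{2i} \leq \eta_{2i-1}$ for $i \in [b-2]$, which reads $x_{s(i+1)} \leq y_{s(i)+1}$ and is the (strict) left half of Lemma~\ref{lem:block}; and $\xi_{2i+1} \leq \eta_{2i}$ for $i \in [b-2]$, which reads $x_{s(i+1)+1} \leq y_{s(i+1)}$ and is Lemma~\ref{lem:irr}\ref{it:IL-b} applied at index $s(i+1)$.

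The only step with any real content is picking the formula for $\phi$. The naive guess $\xi_{2i-1} = x_{s(i+1)}$ (the last $X$-element of block $B_i$) breaks the inequality $\eta_{2i-1} \leq \xi_{2i+1}$, because the last $Y$-element of a block may sit strictly above the last $X$-element of the following block. The choice above is forced instead by Lemma~\ref{lem:block}, whose ``sandwich'' $x_{s(i+1)} < y_{s(i)+1} \leq x_{s(i+1)+1}$ directly supplies the chain $\xi_{2i} < \eta_{2i-1} \leq \xi_{2i+1}$. Once this alignment is found, the entire verification reduces to quoting the two lemmas of Section~\ref{sec:block}.
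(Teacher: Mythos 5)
Your map $\phi$ is exactly the one used in the paper (with $x_1 = x_{s(1)+1}$ since $s(1)=0$), and your verification cites the same two facts — Lemma~\ref{lem:irr} parts~\ref{it:IL-a},~\ref{it:IL-b} and Lemma~\ref{lem:block} — applied in the same way to check conditions~\ref{it:Gbn1} and~\ref{it:Gbn2} of Definition~\ref{dfn:Gbn}. The proof is correct and essentially identical to the paper's.
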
 

\begin{proof}
As usual we may assume that $\tau$ is a primary type of width $k$, say. 
Let ${\tau=B_1 \cdot \ldots \cdot B_b}$ be its block decomposition and define 
$s(i)=\textbf{2}(B_1\cdot\ldots\cdot B_{i})$ for any $i\in [b]$. Since 
\[
0=s(1)<s(2)<\ldots<s(b)=k\,,
\]
there is a map 
\[
\phi\colon\binom{[n]}{k}\longrightarrow V_{b-1}(n)
\]
given by 
\[
\phi\bl\{x_1, \ldots, x_k\}\br=\bl x_{s(1)+1}, x_{s(2)}, x_{s(2)+1}, \ldots, 
x_{s(b-1)},x_{s(b-1)+1}\br\,,
\]
whenever $1\le x_1<\ldots< x_k\le n$. So roughly speaking $\phi$ remembers where the 
``blocks'' of such a set $\{x_1, \ldots, x_k\}$ start and end and forgets everything else.

It remains to verify that $\phi$ sends edges
of $G(n, \tau)$ to edges of $G_{b-1}(n)$. For this purpose let any two vertices $X$ and $Y$
of $G(n, \tau)$ with $\tau(X, Y)=\tau$ be given and write $X=\{x_1, \ldots, x_k\}$ as well
as $Y=\{y_1, \ldots, y_k\}$, listing the elements in increasing order. We need to show that
$\{\phi(X), \phi(Y)\}$ is an edge of $G_{b-1}(n)$, i.e., that the clauses \ref{it:Gbn1} 
and~\ref{it:Gbn2} from Definition~\ref{dfn:Gbn} are satisfied. 

Now by Lemma~\ref{lem:irr}~\ref{it:IL-a} we have, in particular, $x_{s(i)+1}<y_{s(i)+1}$
for all $i\in [b-1]$ and Lemma~\ref{lem:block} tells us that $y_{s(i)+1}\le x_{s(i+1)+1}$
holds for all $i\in[b-2]$. Both statements together yield condition~\ref{it:Gbn1} from 
Definition~\ref{dfn:Gbn}.

For the verification of~\ref{it:Gbn2} we consider the cases that the index $j$ appearing
there is odd or even separately. To deal with the case where $j$ is odd we need to check
that $x_{s(i+1)}\le y_{s(i)+1}$ holds for all $i\in [b-2]$ and Lemma~\ref{lem:block} informs
us that this is indeed true. For even $j$ we need that $x_{s(i+1)+1}\le y_{s(i+1)}$ holds
for all $i\in [b-2]$ and this was obtained in Lemma~\ref{lem:irr}~\ref{it:IL-b}.
\end{proof}

Now it is clear that in order to complete the proof of Theorem~\ref{thm:main-a} we just need to
establish the following result. 
\begin{thm}\label{thm:upperG}
For every positive integer $b$ we have 
\[
\chi\bl G_b(n)\br\le \bl 2^{(b-1)^2}+o(1)\br\log_{(b-1)}(n)\,.
\]
\end{thm}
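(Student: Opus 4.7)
The plan is to argue by induction on $b$. The base case $b = 1$ is immediate: $V_1(n)$ identifies with $[n]$, and conditions~\ref{it:Gbn1}--\ref{it:Gbn2} of Definition~\ref{dfn:Gbn} together reduce to $x_1 \ne y_1$ (the second being vacuous), so $G_1(n) = K_n$ and $\chi(G_1(n)) = n = (2^0 + o(1)) \log_{(0)}(n)$, as required.

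For the inductive step, fix $b \ge 2$ and suppose the claim holds for $b-1$. Setting $N = \lceil \log_2 n \rceil + O(1)$, the strategy is to construct two maps
\[
\phi \colon V_b(n) \longrightarrow V_{b-1}(N), \qquad \alpha \colon V_b(n) \longrightarrow A, \quad |A| = 2^{2b-3},
\]
enjoying the following \emph{reduction property}: for every edge $\{\vec x, \vec y\}$ of $G_b(n)$ satisfying $\alpha(\vec x) = \alpha(\vec y)$, the pair $\{\phi(\vec x), \phi(\vec y)\}$ is an edge of $G_{b-1}(N)$. Granted such $(\phi, \alpha)$, I would fix a proper colouring $c$ of $G_{b-1}(N)$ using at most $(2^{(b-2)^2} + o(1)) \log_{(b-2)}(N)$ colours (available by the inductive hypothesis) and colour each $\vec x \in V_b(n)$ by the pair $(\alpha(\vec x), c(\phi(\vec x)))$; this is a proper colouring of $G_b(n)$ whose size is at most
\[
2^{2b-3} \cdot (2^{(b-2)^2} + o(1)) \log_{(b-2)}(N) = (2^{(b-1)^2} + o(1)) \log_{(b-1)}(n),
\]
using the algebraic identity $(b-2)^2 + (2b-3) = (b-1)^2$ together with $\log_{(b-2)}(\log_2 n + O(1)) = \log_{(b-1)}(n) + o(1)$.

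To construct $\phi$ and $\alpha$, for each $i \in [b-1]$ I would let $h_i(\vec x)$ denote the position of the highest bit where $x_{2i-1}$ and $x_{2i+1}$ disagree; this is well-defined because $x_{2i-1} < x_{2i+1}$, and a quick observation gives $h_i \ne h_{i+1}$ (otherwise $x_{2i+1}$ would have conflicting bits at position $h_i$). These $b-1$ integers encode the shift-graph-like information of $\vec x$: the odd-indexed components of $\phi(\vec x)$ will be built from a suitable sorting of the $h_i$'s, while the even-indexed components of $\phi(\vec x)$ together with the fingerprint $\alpha(\vec x)$ will encode, for each intermediate coordinate $x_{2i}$, binary flags pinning down its position relative to the dyadic intervals determined by $h_{i-1}$ and $h_i$. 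The archetype is the $b = 2$ case, where I take $\phi(\vec x) = h(x_1, x_3) \in V_1(N)$ and $\alpha(\vec x) = \mathrm{bit}_{h(x_1,x_3)}(x_2) \in \{0,1\}$: the edge conditions $x_1 < y_1 \le x_3 < y_3$, $x_2 \le y_1$ and $x_3 \le y_2$ force $\mathrm{bit}_h(x_2) = 0$ and $\mathrm{bit}_h(y_2) = 1$ whenever the two $h$-values coincide, so adjacent vertices sharing the same $\alpha$ must have distinct $h$-values---and distinct values are automatically adjacent in $K_N = G_1(N)$.

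The main obstacle will be verifying the reduction property in general. Given adjacent $\vec x, \vec y$ in $G_b(n)$ with $\alpha(\vec x) = \alpha(\vec y)$, one must check both the strict-interleaving condition~\ref{it:Gbn1} and the monotone $\le$-chain condition~\ref{it:Gbn2} of Definition~\ref{dfn:Gbn} for the image pair $(\phi(\vec x), \phi(\vec y))$ inside $V_{b-1}(N)$. I expect this to require a multi-level bit-pattern analysis, applying the $b = 2$ prototype at each of the $b-1$ consecutive shifts $(x_{2i-1}, x_{2i+1}) \mapsto (y_{2i-1}, y_{2i+1})$ and then using the horizontal chain of conditions~\ref{it:Gbn2} on the middle coordinates $x_{2i}, y_{2i}$ to force the $h_i(\vec x)$'s and $h_i(\vec y)$'s to interleave globally in exactly the pattern demanded by $V_{b-1}(N)$. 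The $2b-3$ binary flags of $\alpha$ correspond precisely to the extra bit information needed to resolve the relative placements of the intermediate coordinates within the dyadic partition, matching the arithmetic identity $(b-1)^2 - (b-2)^2 = 2b-3$.
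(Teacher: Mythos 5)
Your high-level plan matches the paper's: induction on $b$, a reduction $G_b(n)\into G_{b-1}(N)$ with $N\approx\log n$ carried out on each of roughly $2^{2b-3}$ pieces, and the arithmetic identity $(b-2)^2+(2b-3)=(b-1)^2$ accounting for the constant. Your base case $b=1$ and your $b=2$ prototype (using $\phi(\seq{x})=f(x_1,x_3)$ with a single flag recording on which side of $q(x_1,x_3)\cdot 2^{f-1}$ the middle coordinate $x_2$ falls) are both sound and consistent with the paper's Lemmas~\ref{lem:f} and~\ref{lem:G2n}. So far so good.

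The gap is in the general construction, which is where nearly all of the paper's work lies. You propose to build the odd-indexed coordinates of $\phi(\seq{x})$ from the sliding-window quantities $h_i(\seq{x}) = f(x_{2i-1},x_{2i+1})$, observe correctly that consecutive $h_i$ differ, and then ``sort'' them to obtain an increasing sequence. This is a genuinely different choice from the paper's, which sets $\phi(\seq{x}) = \bl f(x_1,x_3), f(x_1,x_4), \ldots, f(x_1,x_{2b-1}) \br$, anchoring every component at $x_1$. The paper's choice is automatically non-decreasing by Lemma~\ref{lem:fmono}, so no reordering is needed, and---crucially---the coordinate-by-coordinate correspondence between $\phi(\seq{x})$ and $\phi(\seq{y})$ is preserved, which is what makes the verification of conditions~\ref{it:Gbn1} and~\ref{it:Gbn2} in Definition~\ref{dfn:Gbn} tractable (via the chain $\eqref{eq:weakodd}$--$\eqref{eq:oddsharp}$--$\eqref{eq:beta}$). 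Your sorting step destroys exactly this correspondence: two adjacent $\seq{x}, \seq{y}$ may have their $h_i$'s sorted by different permutations, and there is no evident reason that the two sorted sequences interleave as $V_{b-1}(N)$ requires, nor that the even-indexed components and the flags in $\alpha$ line up coherently afterwards. You have not shown that the edge constraints of $G_b(n)$ prevent pathological reorderings, and I do not see how to argue this.

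A second, smaller issue: you count $2^{2b-3}$ flags but do not specify them, and the breakdown does not obviously cover the whole vertex set. In the paper the factor $2^{2b-3}$ arises as $2\cdot 2^{2b-4}$: one bit discriminating the two symmetric halves $A$ and $C$ of $V_b(2^n)$ (via the reflection $\eta$ of Fact~\ref{fact:eta}), and $2b-4$ bits recording for each $i\in[3,2b-2]$ whether $f(x_1,x_i)=f(x_1,x_{i+1})$; the remaining ``middle'' set $B$ is handled separately with $2b-6$ additional colours that vanish into the $o(1)$. Your account of the flags (position of $x_{2i}$ relative to dyadic intervals determined by $h_{i-1}$ and $h_i$) suggests about two bits per intermediate coordinate, giving $2^{2(b-1)}$ rather than $2^{2b-3}$, and you have not accounted for the boundary vertices at all. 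In short, the plan is plausible and the arithmetic is right, but the key embedding $\phi$, the precise fingerprint $\alpha$, and the verification of the reduction property---which together are the substance of Proposition~\ref{prop:reduce}---are missing, and the specific construction you outline would need a nontrivial repair before it could be completed.
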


Throughout the rest of this section we deal with the proof this theorem. 
We will proceed by induction on $b$, considering the base cases $b=1$ and $b=2$ separately.
They will be established by statement~\eqref{eq:Gb1} and Lemma~\ref{lem:G2n} below.
The main idea for the induction step is to relate the graphs $G_b(2^n)$ and 
$G_{b-1}(n)$ to each other. 
Roughly speaking, we will show that for any $b\ge 3$ the vertex set of the graph $G_b(2^n)$ 
may be split into about~$2^{2b-3}$ pieces, each of which induces a graph that embeds 
homomorphically into~$G_{b-1}(n)$. A~precise assertion along these lines is provided by 
Proposition~\ref{prop:reduce} below. 
For the construction of half of these homomorphisms it will be helpful 
to bear the following symmetry in mind.

\begin{fact}
\label{fact:eta}
For any positive integers $b$ and $n$ the bijection 
$\eta\colon V_b(2^n)\longrightarrow V_b(2^n)$ given by
\[
(x_1, x_2, \ldots, x_{2b-1})\longmapsto 
\bl (2^n+1)-x_{2b-1}, (2^n+1)-x_{2b-2}, \ldots, (2^n+1)-x_{1}\br
\]
is an automorphism of $G_b(2^n)$.
\end{fact}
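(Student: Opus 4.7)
My plan is to verify in turn that $\eta$ maps $V_b(2^n)$ to itself, that it is a bijection, and that it preserves adjacency in $G_b(2^n)$; these three facts together assert exactly that $\eta$ is a graph automorphism. Writing $N=2^n+1$ and noting that the $i$-th coordinate of $\eta(\seq x)$ is $N-x_{2b-i}$, the verification that $\eta$ lands in $V_b(2^n)$ is a routine term-by-term translation: the chain $1\le x_1\le\dots\le x_{2b-1}\le 2^n$ becomes $1\le N-x_{2b-1}\le\dots\le N-x_1\le 2^n$, and the strict chain on the odd-indexed coordinates of $\seq x$ flips analogously into a strict chain on the odd-indexed coordinates of $\eta(\seq x)$. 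A direct check shows $\eta\circ\eta=\mathrm{id}$, so $\eta$ is an involution and in particular a bijection, and it suffices to establish that $\eta$ sends edges to edges (the statement for non-edges then follows by applying this to $\eta^{-1}=\eta$).

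For the edge-preservation step I would start with an edge $e=\{\seq x,\seq y\}$ presented in the format of Definition~\ref{dfn:Gbn}. Conditions~\ref{it:Gbn1} and~\ref{it:Gbn2} together imply $x_i\le y_i$ for every $i\in[2b-1]$, so after applying $\eta$ the image of $\seq y$ lies componentwise below the image of $\seq x$, and the natural way to exhibit $\eta(e)$ as a ``small, large'' pair in the sense of Definition~\ref{dfn:Gbn} is $\{\eta(\seq y),\eta(\seq x)\}$. I would substitute the coordinates $N-y_{2b-i}$ and $N-x_{2b-i}$ into clauses~\ref{it:Gbn1} and~\ref{it:Gbn2}, then perform the change of variable $k=2b-j-1$, and observe that the resulting list of inequalities coincides term by term with the original constraints on $\seq x$ and $\seq y$, so that the image edge indeed satisfies the definition.

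The main obstacle I anticipate is the careful bookkeeping in this reindexing: one must confirm that the involution $j\mapsto 2b-j-1$ really sweeps out all of $[2b-2]$, so that the shift condition~\ref{it:Gbn2} for the reversed pair is \emph{fully}, not partially, equivalent to the original one, and also that the alternating strict/weak pattern in~\ref{it:Gbn1} transports correctly when the roles of $\seq x$ and $\seq y$ are swapped. Once that index accounting is handled, no additional ideas are required.
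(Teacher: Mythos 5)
The paper itself offers no proof of this fact—it says ``We leave the easy proof of this assertion to the reader''—so there is nothing in the source to compare against. Your argument is correct and complete: the well-definedness and involution checks are straightforward, the choice of $\{\eta(\seq y),\eta(\seq x)\}$ as the ordered presentation of the image edge is the right one, and the reindexing $j\mapsto 2b-j-1$ is indeed a bijection of $[2b-2]$ onto itself that carries clause~\ref{it:Gbn2} for the image pair onto clause~\ref{it:Gbn2} for the original pair, while for odd $i$ the coordinate $2b-i$ is again odd, so clause~\ref{it:Gbn1} transports term by term. This is exactly the direct verification the authors had in mind when deferring the proof.
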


We leave the easy proof of this assertion to the reader. 

\subsection{Colouring the auxiliary graphs $G_b(n)$} 
Clearly the graph $G_1(n)$ is nothing else than a clique with $n$ vertices. 
Thus we have
\begin{equation}
\label{eq:Gb1}
\chi\bl G_1(n)\br=n \qquad \text{ for every positive integer } n\,.
\end{equation}
The case $b=2$ of Theorem~\ref{thm:upperG} is technically a lot easier than 
the general case and thus we would like to treat it separately.

\begin{lemma}
\label{lem:G2n}
We have $\chi\bl G_2(n)\br\le 2\lceil\log(n)\rceil-1$ for all integers $n\ge 2$.
\end{lemma}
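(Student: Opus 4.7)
My plan is to prove the bound by induction on $k=\lceil\log_2 n\rceil$. The base case $k=1$, i.e.\ $n=2$, is immediate: $V_2(2)=\{(1,1,2),(1,2,2)\}$ has no edge, because any edge in $G_2(n)$ requires the strict inequality $x_1<y_1$ after a suitable labelling, and both first coordinates here equal $1$. For the inductive step, I take $n\ge 3$, set $m=\lceil n/2\rceil$, and split $[n]=L\sqcup R$ with $L=[1,m]$ and $R=[m+1,n]$. Since every vertex satisfies $x_1\le x_2\le x_3$, recording for each coordinate whether it lies in $L$ or in $R$ partitions $V_2(n)$ into exactly the four classes $\mathrm{LLL}$, $\mathrm{LLR}$, $\mathrm{LRR}$, and $\mathrm{RRR}$.

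The next step is to colour the two ``pure'' classes using the induction hypothesis. The subgraphs induced on $\mathrm{LLL}$ and $\mathrm{RRR}$ are, up to translation, copies of $G_2(m)$ and $G_2(n-m)$; since $2^{k-1}<n\le 2^k$ gives $\lceil\log m\rceil\le k-1$ and $\lceil\log(n-m)\rceil\le k-1$, the inductive hypothesis provides proper colourings of each using at most $2(k-1)-1=2k-3$ colours. Moreover, no edge can join an $\mathrm{LLL}$ vertex $\seq x$ to an $\mathrm{RRR}$ vertex $\seq y$: Definition~\ref{dfn:Gbn}\ref{it:Gbn1} requires $y_1\le x_3$, but $y_1\ge m+1>m\ge x_3$. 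So the same palette of $2k-3$ colours can be used on $\mathrm{LLL}\cup\mathrm{RRR}$.

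For the two ``mixed'' classes I would show each is an independent set in $G_2(n)$. An edge inside $\mathrm{LLR}$, taken with $x_1\le y_1$, would demand $x_3\le y_2$, but $x_3\in R$ and $y_2\in L$ force $x_3\ge m+1>m\ge y_2$, a contradiction; a symmetric argument using $x_2\le y_1$ with $x_2\in R$ and $y_1\in L$ rules out an internal edge of $\mathrm{LRR}$. Hence one fresh colour covers all of $\mathrm{LLR}$ and a second fresh colour covers all of $\mathrm{LRR}$, bringing the total to $2k-1$. The remaining cross-class edges are automatically proper, because edges between $\mathrm{LLL}\cup\mathrm{RRR}$ and $\mathrm{LLR}\cup\mathrm{LRR}$ are separated by the two fresh colours never appearing in the inner palette, while $\mathrm{LLR}$--$\mathrm{LRR}$ edges are separated because those two classes carry different fresh colours. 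The only genuine obstacle in the whole argument is the routine case-by-case unpacking of the edge condition from Definition~\ref{dfn:Gbn} that validates the intra-class independence of $\mathrm{LLR}$ and $\mathrm{LRR}$ together with the absence of $\mathrm{LLL}$--$\mathrm{RRR}$ edges; once that verification is in place, the induction closes cleanly.
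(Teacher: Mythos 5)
Your proof is correct and follows essentially the same approach as the paper: the paper first reduces to $n=2^k$ and then proves $\chi\bl G_2(2m)\br\le \chi\bl G_2(m)\br+2$ via exactly your four-way split $A,B,C,D$ (your LLL, LLR, LRR, RRR), showing $B$ and $C$ independent and $A\cup D$ a disjoint union of two copies of $G_2(m)$. The only differences are cosmetic: you induct directly on $\lceil\log_2 n\rceil$ with $m=\lceil n/2\rceil$ rather than first reducing to powers of two (which forces the minor unaddressed edge case $n-m=1$, where $G_2(1)$ is the empty graph and the bound holds trivially).
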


\begin{proof}
Clearly it suffices to show $\chi\bl G_2(2^k)\br\le 2k-1$ for all positive integers $k$ 
and we shall do so by induction on $k$. The base case $k=1$ poses no difficulty 
because the graph~$G_2(2)$ just consists of two isolated vertices. To handle 
the induction step it is enough to show 
\begin{equation}\label{eq:G2n}
\chi\bl G_2(2m)\br\le\chi\bl G_2(m)\br+2
\quad \text{for all } m\ge 2\,.
\end{equation} 
Bearing this goal in mind we partition the vertex set of $G_2(2m)$ into the four classes
\begin{align*}
A& =\bigl\{(x, y, z)\in V_2(2m) \,|\,z\le m\bigr\}\,, \\
B& =\bigl\{(x, y, z)\in V_2(2m) \,|\,y\le m<z\bigr\}\,, \\
C& =\bigl\{(x, y, z)\in V_2(2m) \,|\,x\le m<y\bigr\}\,, \\
\text{ and } \quad D& =\bigl\{(x, y, z)\in V_2(2m) \,|\,m<x\bigr\}\,.
\end{align*}
We also identify subsets of $V_2(2m)$ with the subgraphs of $G_2(2m)$ that they induce.
Evidently~$A$ is the same as $G_2(m)$, the map $(x, y, z)\longmapsto (x+m, y+m, z+ m)$
provides an isomorphism between $A$ and $D$, and there are no edges between $A$ and $D$.
Therefore $A\cup D$ is a disjoint union of two copies of $G_2(m)$ and we have
$\chi(A\cup D)=\chi\bl G_2(m)\br$. Moreover, using condition~\ref{it:Gbn2} from 
Definition~\ref{dfn:Gbn} it is easy to check that the sets $B$ and $C$ are independent.
This concludes the proof of~\eqref{eq:G2n} and, thus, the proof of Lemma~\ref{lem:G2n}.
\end{proof}

Before we proceed to the colouring of $G_b(2^n)$ for $b\ge 3$ we introduce 
some auxiliary functions.

\begin{lemma}
\label{lem:f}
Given any integers $x$ and $y$ with $1\le x<y$ there exist a positive integer $f$ and an 
odd positive integer $q$ such that 
\[
(q-1)\cdot 2^{f-1}<x\le q\cdot 2^{f-1}<y\le (q+1)\cdot 2^{f-1}\,.
\]
Moreover, $f$ and $q$ are uniquely determined by $x$ and $y$ so that we may write 
$f=f(x, y)$ as well as $q=q(x, y)$.
\end{lemma}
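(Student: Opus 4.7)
The plan is to interpret the prescribed chain of inequalities as a nesting statement for dyadic intervals. As $q$ ranges over the positive odd integers, the intervals $\bigl((q-1)\cdot 2^{f-1},\,(q+1)\cdot 2^{f-1}\bigr]$ are precisely the standard dyadic intervals of length $2^f$ tiling the positive integers, and each of them is split by its midpoint $q\cdot 2^{f-1}$ into a left half and a right half of common length $2^{f-1}$. The conclusion of the lemma asserts that $x$ and $y$ lie in some common interval of this family with $x$ in the closed left half and $y$ strictly in the right half, and that the pair $(f,q)$ producing this interval is uniquely determined.

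For existence, I would take $f$ to be the smallest positive integer for which $x$ and $y$ lie in a common length-$2^f$ dyadic interval; this minimum is well defined since any $f$ with $2^f\ge y$ places both $x$ and $y$ in $\bigl(0,2^f\bigr]$. Let $I$ denote the chosen interval and $q$ its odd index. By the minimality of $f$ (or, if $f=1$, simply by $x\ne y$) the integers $x$ and $y$ lie in different length-$2^{f-1}$ dyadic intervals, which are precisely the two halves of $I$. Since $x<y$, this forces $x$ into the left half and $y$ into the right, yielding the required inequalities.

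For uniqueness, suppose that $(f,q)$ and $(f',q')$ both satisfy the conditions, with $f\le f'$. The case $f=f'$ is immediate: two dyadic intervals of equal length are either identical or disjoint, so their common element $x$ forces $q=q'$. In the case $f<f'$ the number $q'\cdot 2^{f'-1}$ is a multiple of $2^{f'-1}$ and hence of $2^f$, while the chain $(q-1)\cdot 2^{f-1}<x\le q'\cdot 2^{f'-1}<y\le (q+1)\cdot 2^{f-1}$ places it strictly inside the closed interval $\bigl[(q-1)\cdot 2^{f-1},\,(q+1)\cdot 2^{f-1}\bigr]$ of length $2^f$; but the only multiples of $2^f$ inside such an interval are its two endpoints, a contradiction. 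The whole argument is elementary bookkeeping with dyadic intervals, and I do not anticipate any serious obstacle.
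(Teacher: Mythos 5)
Your proof is correct, and it reaches the same conclusion by a different route than the paper's. For existence, the paper computes $f$ and $q$ explicitly from the binary expansions of $x-1$ and $y-1$ (letting $f-1$ index the first bit where they differ and reading off $q$ from the common prefix), whereas you characterise $f$ extensionally as the smallest scale at which $x$ and $y$ lie in a common dyadic interval and then argue that minimality pushes them into opposite halves. These are the same quantity described in two ways; your version makes the dyadic-tree picture explicit and skips the digit bookkeeping. For uniqueness, the paper's alternative argument combines the two inequality chains into $q \le q'\cdot 2^{f'-f} \le q$, forcing $q = q'\cdot 2^{f'-f}$, and then uses the oddness of $q$; you instead treat $f=f'$ (disjoint-or-equal dyadic intervals) and $f<f'$ separately, ruling out the latter because $q'\cdot 2^{f'-1}$ would be a multiple of $2^f$ lying strictly between the two endpoints of an interval of length $2^f$ whose endpoints are the only such multiples. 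The two uniqueness arguments encode the same obstruction — oddness of $q$ is exactly what makes $(q\pm1)\cdot 2^{f-1}$ multiples of $2^f$ — so this is a geometric repackaging rather than a new idea, but it reads cleanly. One small wording nit: you say the chain places $q'\cdot 2^{f'-1}$ ``strictly inside the closed interval''; strictly inside the \emph{open} interval, or simply ``strictly between the endpoints,'' is what you mean, and that is what the argument needs.
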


\begin{proof}
Let us first prove the existence of $f$ and $q$. To this end, we pick an integer $n$ 
with $y\le 2^n$. Then we expand $x-1$ and $y-1$ in the binary system using $n$ digits 
and allowing leading zeros. Say that this yields $x-1=x_{n-1}\ldots x_1x_0$ and 
$y-1=y_{n-1}\ldots y_1y_0$. Next we compare these expansions from left to right and let
$x_{f-1}\ne y_{f-1}$ be the first place where they differ. 
Notice that $x<y$ entails $x_{f-1}=0$ and $y_{f-1}=1$.
Finally we let $q$ be the number with binary representation $q=x_{n-1}\ldots x_f1$.

So formally we have 
\[
x-1=\sum_{i=0}^{n-1}x_i\cdot 2^i\,, \quad 
y-1=\sum_{i=0}^{n-1}y_i\cdot 2^i\,, \quad 
q=1+\sum_{i=1}^{n-f}x_{f+i-1}2^i
\]
and $x_j=y_j$ for $j\in [f, n-1]$. Clearly, $q$ is odd and 
\[
(q-1)\cdot 2^{f-1}\le x-1< q\cdot 2^{f-1}\le y-1< (q+1)\cdot 2^{f-1}\,,
\]
wherefore $f$ and $q$ are as desired. 

\medskip
\begin{center}
\begin{tabular}{|c||c|c|c|c|c|c|c|c|}
\hline
      & $2^{n-1}$ & $2^{n-2}$ & $\ldots$ & $2^f$ & $2^{f-1}$ & $2^{f-2}$ & $\ldots$  & $1$ \\ \hline \hline
$x-1$ & $x_{n-1}$ & $x_{n-2}$ & $\ldots$ & $x_f$ & $0$       & $x_{f-2}$ & $\ldots$  & $x_0$ \\ \hline
$y-1$ & $x_{n-1}$ & $x_{n-2}$ & $\ldots$ & $x_f$ & $1$       & $y_{f-2}$ & $\ldots$  & $y_0$ \\ \hline
$q\cdot 2^{f-1}$ & $x_{n-1}$ & $x_{n-2}$ & $\ldots$ & $x_f$ & $1$       & $0$ & $\ldots$  & $0$ \\ \hline
\end{tabular}
\end{center}

\medskip

The uniqueness of $f$ and $q$ may likewise be shown by studying the binary expansions 
of~$x-1$ and $y-1$. An alternative argument proceeds as follows: 

Given $x$ and $y$, 
let $(f, q)$ and $(f', q')$ be two pairs with the 
requested properties. Due to symmetry we may suppose $f\le f'$. Now we have 
$(q-1)\cdot 2^{f-1}<x\le q'\cdot 2^{f'-1}$ and consequently $q\le q'\cdot 2^{f'-f}$.
Similarly $q'\cdot 2^{f'-1}<y\le (q+1)2^{f-1}$ yields $q'\cdot 2^{f'-f}\le q$. 
The combination of both estimates reveals $q= q'\cdot 2^{f'-f}$ but, since $q$ is odd, 
this if only possible if $f=f'$ and~$q=q'$.
\end{proof}

We would like to point out that the uniqueness of $f$ and $q$ will be 
essential throughout the following arguments.
By redoing the above proof of this uniqueness more carefully one can show the following 
monotonicity property of the function $f$.

\begin{lemma}
\label{lem:fmono}
For any three positive integers $x$, $y$, and $z$ such that $x<y\le z$ the inequality 
${f(x, y)\le f(x, z)}$ holds.
\end{lemma}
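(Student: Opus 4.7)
My plan is to use the inequality characterisation from Lemma~\ref{lem:f} together with its uniqueness clause. Set $f=f(x,y)$ and $q=q(x,y)$, so that
\[
(q-1)\cdot 2^{f-1} < x \le q\cdot 2^{f-1} < y \le (q+1)\cdot 2^{f-1}\,,
\]
and split the argument according to whether $z \le (q+1)\cdot 2^{f-1}$ or $z > (q+1)\cdot 2^{f-1}$.

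The easy case is $z\le (q+1)\cdot 2^{f-1}$: since $q\cdot 2^{f-1} < y \le z$, the pair $(f,q)$ satisfies the defining inequalities for $(x,z)$ in place of $(x,y)$, and the uniqueness asserted by Lemma~\ref{lem:f} then gives $f(x,z)=f=f(x,y)$. This even yields equality.

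The other case, $z > (q+1)\cdot 2^{f-1}$, is where the real work lies, and I would argue by contradiction. Suppose $f':=f(x,z)\le f$ and set $q':=q(x,z)$. I plan to extract two opposite integer inequalities linking $q$ and $q'$: combining $z\le (q'+1)\cdot 2^{f'-1}$ with $(q+1)\cdot 2^{f-1} < z$ gives $q'+1 > (q+1)\cdot 2^{f-f'}$, and since both sides are integers this upgrades to $q'\ge (q+1)\cdot 2^{f-f'}$; symmetrically, combining $(q'-1)\cdot 2^{f'-1} < x$ with $x\le q\cdot 2^{f-1}$ gives $q'-1 < q\cdot 2^{f-f'}$, hence $q'\le q\cdot 2^{f-f'}$. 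Taken together, $(q+1)\cdot 2^{f-f'}\le q\cdot 2^{f-f'}$, which is absurd. So $f(x,z)>f$ in this case.

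I do not foresee a genuine obstacle; the only delicate point is to keep track of which inequalities are strict so that the passage from a strict real inequality to the corresponding non-strict integer inequality is legitimate, and to remember that $2^{f-f'}$ is a positive integer because we have assumed $f'\le f$. A more conceptual alternative would be to identify $f(x,y)-1$ with the index of the most significant bit where the binary expansions of $x-1$ and $y-1$ differ, and then observe that enlarging the larger argument can only push this index up; but the short arithmetic argument above avoids having to formalise that picture.
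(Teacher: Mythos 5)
Your argument is correct, and it works with the same raw material as the paper — the inequality characterisation in Lemma~\ref{lem:f} together with the uniqueness of $(f,q)$ — but the contradiction is reached by a different route. The paper runs a single indirect argument: assuming $f'=f(x,z)<f$ it derives $q'\le q\cdot 2^{f-f'}$ from the lower bound on $x$ and $q\cdot 2^{f-f'}\le q'$ from the chain $q\cdot 2^{f-1}<y\le z\le(q'+1)2^{f'-1}$, forcing $q'=q\cdot 2^{f-f'}$ with $f-f'\ge 1$, which is even and thus contradicts the oddness of $q'$. You instead split on whether $z\le(q+1)\cdot 2^{f-1}$. In the boundary case you get $f(x,z)=f(x,y)$ directly by uniqueness; in the remaining case the strict bound $(q+1)\cdot 2^{f-1}<z$ gives you the sharpened inequality $(q+1)\cdot 2^{f-f'}\le q'$, which collides head-on with $q'\le q\cdot 2^{f-f'}$ and yields a purely arithmetic absurdity with no appeal to parity. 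The case split costs a few extra lines but removes the dependence on $q'$ being odd and, as a side effect, pins down exactly when equality $f(x,y)=f(x,z)$ occurs; the paper's version is slightly more compact. Both are sound.
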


\begin{proof}
For brevity we set $f=f(x, y)$, $q=q(x, y)$, $f'=f(x, z)$, and $q'=q(x, z)$. 
Arguing indirectly we assume $f'<f$. Now $(q'-1)\cdot 2^{f'-1}<x\le q\cdot 2^{f-1}$ 
entails $q'\le q\cdot 2^{f-f'}$ and similarly 
$q\cdot 2^{f-1}< y\le z\le (q'+1)\cdot 2^{f'-1}$ leads to $q\cdot 2^{f-f'}\le q'$. 
Hence we must have $q'=q\cdot 2^{f-f'}$, contrary to the fact that $q'$ is odd.  
\end{proof}

The following will be a standard argument later on.

\begin{lemma}
\label{lem:standard}
For any positive integers $x<y\le z$ with $f(x,y)=f(x,z)$ we have
$q(x,y)=q(x,z)$ and, consequently,
\[
(q-1)\cdot 2^{f-1}<x\le q\cdot 2^{f-1}<y\le z\le (q+1)\cdot 2^{f-1}\,,
\]
where $f=f(x, y)=f(x,z)$ and $q=q(x,y)=q(x,z)$.
\end{lemma}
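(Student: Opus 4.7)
The plan is to exploit the uniqueness of $(f,q)$ granted by Lemma~\ref{lem:f} combined with the fact that we are assuming $f(x,y) = f(x,z)$ from the outset, so that only the ``$q$-coordinate'' needs to be pinned down. Write $f = f(x,y) = f(x,z)$, $q = q(x,y)$, and $q' = q(x,z)$. By the defining property from Lemma~\ref{lem:f} we have
\[
(q-1)\cdot 2^{f-1} < x \le q\cdot 2^{f-1} < y \le (q+1)\cdot 2^{f-1}
\]
and
\[
(q'-1)\cdot 2^{f-1} < x \le q'\cdot 2^{f-1} < z \le (q'+1)\cdot 2^{f-1}\,.
\]

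First I would compare the two lower bounds on $x$: from $(q-1)\cdot 2^{f-1} < x \le q'\cdot 2^{f-1}$ it follows that $q-1 < q'$, hence $q \le q'$ since both are integers. Swapping the roles of $y$ and $z$ gives $(q'-1)\cdot 2^{f-1} < x \le q\cdot 2^{f-1}$, which in the same way yields $q' \le q$. Therefore $q = q'$, which is the first claim.

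For the displayed chain of inequalities, the bounds $(q-1)\cdot 2^{f-1} < x \le q\cdot 2^{f-1} < y$ come directly from the defining property of $(f, q)$ for the pair $(x,y)$. The hypothesis $y \le z$ contributes the middle step, and finally $z \le (q'+1)\cdot 2^{f-1} = (q+1)\cdot 2^{f-1}$ follows from the defining property for $(x,z)$ together with $q = q'$. Concatenating these gives the full chain. There is no real obstacle here; the statement is essentially a packaging of the uniqueness clause of Lemma~\ref{lem:f} for later reuse.
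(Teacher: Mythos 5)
Your proof is correct and takes essentially the same approach as the paper: both derive $q(x,y)=q(x,z)$ from the defining inequalities in Lemma~\ref{lem:f} once $f$ is known to coincide. The paper phrases the deciding observation as ``$q\cdot 2^{f-1}$ is the least multiple of $2^{f-1}$ which is at least $x$,'' whereas you unpack this into an explicit two-sided comparison of $q$ and $q'$; this is merely a difference in presentation, not in substance.
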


\begin{proof}
Define $q=q(x,y)$. Lemma~\ref{lem:f} gives 
\[
(q-1)\cdot 2^{f-1}<x\le q\cdot 2^{f-1}<y\le (q+1)\cdot 2^{f-1}
\]
and thus $q\cdot 2^{f-1}$ is the least multiple of $2^{f-1}$ which is at least $x$.
Due to $f=f(x,z)$ this yields $q(x,z)=q$ and hence $z\le (q+1)\cdot 2^{f-1}$.
\end{proof}

Next we record another property of $f$ that shall be utilised later.

\begin{lemma}
\label{lem:strange}
If any four positive integers $t$, $x$, $y$, and $z$ satisfy $t\le x<y\le z$ and 
${f(x, y)=f(x, z)}$, then $f(t, y)=f(t, z)$ holds as well. 
\end{lemma}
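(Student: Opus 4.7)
The plan is to return to the binary-expansion picture from the proof of Lemma~\ref{lem:f}. Choose $n\in\NN$ with $z\le 2^n$, and write each of $t-1$, $x-1$, $y-1$, $z-1$ as an $n$-digit binary string $t_{n-1}\ldots t_0$, $x_{n-1}\ldots x_0$, and so on. Let $f$ denote the common value $f(x,y)=f(x,z)$. The characterisation of $f$ as the first position (from the top) at which two binary expansions disagree then immediately gives $x_j=y_j=z_j$ for every $j\ge f$, while $x_{f-1}=0$ and $y_{f-1}=z_{f-1}=1$. Our task is to show that $t-1$ first disagrees with $y-1$ at the same position where it first disagrees with $z-1$.

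I split into two cases according to whether $t-1$ agrees with $x-1$ at every position $\ge f$. In the agreement case, $t-1$ also shares the common prefix at positions $\ge f$; the hypothesis $t\le x$ combined with $x_{f-1}=0$ then forces $t_{f-1}=0$ (otherwise $t_{f-1}=1$ would already imply $t-1>x-1$). Hence $t-1$ first differs from $y-1$ and from $z-1$ at position $f-1$, giving $f(t,y)=f=f(t,z)$. In the disagreement case, let $f''-1\ge f$ be the \emph{highest} position at which $t-1$ disagrees with $x-1$. Then $t_j=x_j$ for $j\ge f''$, and the inequality $t\le x$ forces $t_{f''-1}=0$ and $x_{f''-1}=1$. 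Since $f''-1\ge f$ lies in the range on which $x-1$, $y-1$, $z-1$ agree, we have $y_{f''-1}=z_{f''-1}=1$ and $y_j=z_j=t_j$ for all $j\ge f''$, so the first disagreement of $t-1$ with each of $y-1$ and $z-1$ occurs at position $f''-1$, giving $f(t,y)=f''=f(t,z)$.

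The only delicate point—and hence the anticipated main obstacle—is the inference $t_{f-1}=0$ in the first case (and its analogue $t_{f''-1}=0$ in the second). Each is just the elementary observation that once $t-1$ and $x-1$ share all strictly higher-order bits, a $1$ for $t-1$ paired with a $0$ for $x-1$ at the very next position would already force $t-1>x-1$, contradicting $t\le x$. Everything else is bookkeeping on binary expansions along the lines of the proof of Lemma~\ref{lem:f}.
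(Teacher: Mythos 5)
Your proof is correct. The two arguments are related in spirit but differ in mechanics: the paper works at the level of the interval characterisation from Lemma~\ref{lem:f}, setting $f=f(t,z)$, $q=q(t,z)$, reducing the claim to showing $q\cdot 2^{f-1}<y$, and deriving a contradiction (namely $f(x,z)=f\ne f(x,y)$) from the alternative $y\le q\cdot 2^{f-1}$ -- a four-line argument that never opens up binary expansions. You instead return to the underlying binary-expansion picture and compute $f(t,y)$ and $f(t,z)$ directly, splitting on whether $t-1$ and $x-1$ share all bits in positions $\ge f$ or first disagree at some higher position $f''-1$. Both routes are sound; your case analysis is more explicit and self-contained but also longer and more bookkeeping-heavy, whereas the paper's version leans on the already-established uniqueness of the pair $(f,q)$ to shortcut the computation. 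One small point worth making explicit in your Case~2 write-up: the hypothesis $t\le x$ together with $t_j=x_j$ for $j\ge f''$ forces $t_{f''-1}=0$ and $x_{f''-1}=1$ because the opposite assignment would already make $t-1>x-1$ regardless of the lower-order bits -- you state this as the "anticipated obstacle" at the end, and it is indeed the only place where an inequality on $t,x$ enters, so it deserves the emphasis you give it.
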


\begin{proof}
Setting $f=f(t, z)$ and $q=q(t, z)$ we get 
\[
(q-1)\cdot 2^{f-1}<t\le q\cdot 2^{f-1}<z\le (q+1)\cdot 2^{f-1}
\]
from the definition of these quantities. 

Of course the claim would easily follow from
${q\cdot 2^{f-1}<y}$. So from now on we may assume $y\le q\cdot 2^{f-1}$ towards 
contradiction. This yields
\[
(q-1)\cdot 2^{f-1}<t\le x< y\le q\cdot 2^{f-1}<z\le (q+1)\cdot 2^{f-1}\,,
\]
and, in particular, we obtain $f(x, z)=f$ but $f(x, y)\ne f$, thus reaching a contradiction.  
\end{proof}

To conclude our dicussion of the auxiliary functions $f$ and $q$ we state how they interact
with the map $\eta$ introduced in Fact~\ref{fact:eta}.

\begin{fact}
\label{fact:f-reflect}
For any integers $x$ and $y$ with $1\le x<y\le 2^n$ we have
\begin{align*}
f(x, y)& \in [n]\,,\\
f(2^n+1-y, 2^n+1-x) & =f(x, y)\,, \\
\text{ and } \quad
q(2^n+1-y, 2^n+1-x) &=2^{n+1-f}-q(x, y)\,.
\end{align*}
\end{fact}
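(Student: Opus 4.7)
The plan is to dispatch each of the three claims in turn, relying on the uniqueness half of Lemma~\ref{lem:f} for the two symmetry relations. For the first claim $f(x,y)\in[n]$, one simply observes that the defining inequalities together with $q\ge 1$ and $y\le 2^n$ force $2^{f-1}\le q\cdot 2^{f-1}<y\le 2^n$, hence $f\le n$; and $f\ge 1$ is built into the definition of $f$.

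For the other two claims, write $f=f(x,y)$, $q=q(x,y)$, set $x'=2^n+1-y$, $y'=2^n+1-x$, and take as a candidate $q'=2^{n+1-f}-q$. The strategy is to verify that the pair $(f,q')$ satisfies, with respect to $(x',y')$, the chain of inequalities characterising $f$ and $q$ in Lemma~\ref{lem:f}, and separately that $q'$ is a positive odd integer. Once both points are in place, the uniqueness assertion of Lemma~\ref{lem:f} yields $f(x',y')=f$ and $q(x',y')=q'$ in a single stroke. To produce the requisite chain I would reflect the original chain for $x,y,f,q$ about the point $2^n+\tfrac{1}{2}$, using the identity
\[
2^n+1-j\cdot 2^{f-1}=(2^{n+1-f}-j)\cdot 2^{f-1}+1
\]
together with the standard integer conversions $a+1\le b\iff a<b$ and $a<b+1\iff a\le b$ to transform the original chain into the Lemma~\ref{lem:f} format for $(x',y',f,q')$.

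Positivity of $q'$ follows immediately from $q\cdot 2^{f-1}<y\le 2^n$, which forces $q<2^{n+1-f}$. Oddness of $q'$ is the only place where the first claim $f\le n$ is actually used: it guarantees $n+1-f\ge 1$, so that $2^{n+1-f}$ is even and $q'=\text{even}-\text{odd}$ is odd. The argument is essentially mechanical; the only place where care is needed is the bookkeeping of strict versus weak inequalities when reflecting the chain about $2^n+\tfrac{1}{2}$, which is the one step where a sign slip could plausibly occur. Everything else reduces to a single invocation of the uniqueness clause of Lemma~\ref{lem:f}.
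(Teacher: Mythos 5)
Your proposal is correct, and the computations check out. Setting $x'=2^n+1-y$, $y'=2^n+1-x$, $q'=2^{n+1-f}-q$ and using $q'\cdot 2^{f-1}=2^n-q\cdot 2^{f-1}$, one finds that the four inequalities in the chain $(q'-1)2^{f-1}<x'\le q'2^{f-1}<y'\le (q'+1)2^{f-1}$ translate, in reverse order, exactly into the four inequalities $(q-1)2^{f-1}<x\le q2^{f-1}<y\le (q+1)2^{f-1}$ under the conversions $a<b\iff a+1\le b$ for integers; together with $q'$ positive (from $q<2^{n+1-f}$) and odd (from $f\le n$, so that $2^{n+1-f}$ is even), the uniqueness clause of Lemma~\ref{lem:f} gives both symmetry identities at once. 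The paper states this Fact without proof, declaring the verification ``straightforward'' and leaving it to the reader, so there is no paper proof to compare against; your uniqueness-based reflection argument is plainly the intended one, and you were right to flag the strict/weak bookkeeping under the reflection about $2^n+\tfrac12$ as the one place demanding care --- you handled it correctly.
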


Again we leave the straightforward verification to the reader.
We may now return to the problem of colouring the graphs $G_b(2^n)$.

\begin{prop}
\label{prop:reduce}
We have
\[
\chi\bl G_b(2^n)\br\le (2b-6)+2^{2b-3}\,\chi\bl G_{b-1}(n)\br
\]
for any integers $n\ge b\ge 3$.
\end{prop}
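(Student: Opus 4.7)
I would mimic the strategy of Lemma~\ref{lem:G2n} at a higher level of recursion. The aim is to exhibit a partition of $V_b(2^n)$ into at most $2^{2b-3}$ ``generic'' subclasses, each admitting an explicit homomorphism into $G_{b-1}(n)$, together with at most $2b-6$ ``boundary'' subclasses, each of which is independent in $G_b(2^n)$. Colouring the generic subclasses by pulling back an optimal colouring of $G_{b-1}(n)$ along the homomorphism and the boundary subclasses by a single colour apiece then yields the claimed bound.

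\textbf{The construction.} To each vertex $\seq{x}=(x_1,\ldots,x_{2b-1})\in V_b(2^n)$ I would associate the $b-1$ dyadic scales $f_i(\seq{x})=f(x_{2i-1},x_{2i+1})\in[n]$ and the corresponding separators $q_i\cdot 2^{f_i-1}$; these play the role of the midpoint $m=2^{n-1}$ in the proof of Lemma~\ref{lem:G2n} but are locally adapted to each pair $(x_{2i-1},x_{2i+1})$. I would then partition $V_b(2^n)$ by a signature of length $2b-3$, recording for each $i\in[b-1]$ on which side of its separator the even coordinate $x_{2i}$ lies (contributing $b-1$ bits) together with, for each $i\in[b-2]$, the sign of $f_{i+1}-f_i$ (contributing $b-2$ bits); the consecutive scales are always distinct by Lemma~\ref{lem:standard}, so this is well-defined. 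On a generic class $\mathcal{C}$ I would define $\phi_{\mathcal{C}}\colon\seq{x}\mapsto(a_1,g_1,a_2,g_2,\ldots,a_{b-1})\in V_{b-1}(n)$, whose odd entries $a_i$ are the $f_j$'s rearranged into strictly increasing order (as dictated by the sign bits) and whose even entries $g_i$ are auxiliary $f$-values chosen from nearby coordinates so that the interleaving inequalities defining $V_{b-1}(n)$ are satisfied. Fact~\ref{fact:eta} is then used to halve the case analysis by identifying classes related via overall reflection.

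\textbf{Main obstacle.} The technical heart is verifying that each $\phi_{\mathcal{C}}$ is a graph homomorphism, i.e.\ that both conditions \ref{it:Gbn1} and \ref{it:Gbn2} of Definition~\ref{dfn:Gbn} are inherited by the image of any edge of $G_b(2^n)$ with both endpoints in $\mathcal{C}$. Condition~\ref{it:Gbn1} should reduce to Lemma~\ref{lem:fmono} and to the distinctness $f(a,b)\ne f(b,c)$ for $a<b<c$ (a direct consequence of Lemma~\ref{lem:standard}). Condition~\ref{it:Gbn2} appears more delicate and is precisely where Lemma~\ref{lem:strange} seems essential: one propagates equalities between $f$-values at related endpoints down to the auxiliary $f$-values appearing in the even entries of $\phi_{\mathcal{C}}$. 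Finally, the $2b-6$ boundary subclasses---those where certain strict inequalities in condition~\ref{it:Gbn1} degenerate into equalities, so that the generic $\phi_{\mathcal{C}}$ fails to land in $V_{b-1}(n)$---are shown to be independent by invoking once more the fact that $f(a,b)\ne f(b,c)$ whenever $a<b<c$.
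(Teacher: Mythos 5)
The plan is a sketch whose ``technical heart'' you explicitly leave open, and I believe the route you choose cannot be closed without reverting to a different anchoring of the scale function $f$. The paper does not use the local scales $f(x_{2i-1},x_{2i+1})$ at all; instead it defines $\phi(\seq{x})=\bigl(f(x_1,x_3),f(x_1,x_4),\ldots,f(x_1,x_{2b-1})\bigr)$, with every entry anchored at $x_1$. This choice is what makes the image automatically a weakly increasing $(2b-3)$-tuple, thanks to the monotonicity Lemma~\ref{lem:fmono}, so that no ``rearranging'' is ever needed. Your local $f_i$'s have no such monotonicity; you therefore propose to sort them using the recorded sign bits, but the sorting step is where the argument breaks. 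First, while consecutive scales $f_i,f_{i+1}$ are indeed distinct, non-consecutive ones can coincide (nothing prevents $f(x_1,x_3)=f(x_5,x_7)$), so the ``strictly increasing'' claim for the odd entries of $\phi_{\mathcal C}(\seq{x})$ is unjustified and the image need not even lie in $V_{b-1}(n)$. Second, and more seriously, even if $\seq{x}\sim\seq{y}$ share the same sign pattern, applying the same permutation to $(f_1(\seq{x}),\ldots)$ and $(f_1(\seq{y}),\ldots)$ gives no control at all on the interleaving inequalities \ref{it:Gbn1} or on \ref{it:Gbn2}: those compare entries of $\phi(\seq{x})$ against entries of $\phi(\seq{y})$ in a fixed position, which a vertex-dependent rearrangement destroys. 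The fact that Lemma~\ref{lem:strange} requires a common left anchor ($t\le x<y\le z$) is a hint that the machinery is built for the $x_1$-anchored $f$-values, not for your telescoping ones.

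Two further gaps remain even setting the above aside. The even entries $g_i$ of your map are entirely unspecified (``auxiliary $f$-values chosen from nearby coordinates''); in the paper they are precisely $f(x_1,x_4),f(x_1,x_6),\ldots$, and establishing \ref{it:Gbn2} for them uses a delicate induction (\eqref{eq:oddsharp}) together with the alternative identity $f(x_1,y_{i+2})=f(y_1,y_{i+2})$ from~\eqref{eq:beta}, none of which you address. And the accounting does not come out: a signature with $2b-3$ bits yields at most $2^{2b-3}$ classes in total, not $2^{2b-3}$ generic classes plus $2b-6$ additional boundary classes. In the paper the additive term $2b-6$ arises from a separate first-stage partition $V_b(2^n)=A\cup B\cup C$ (according to where $x_3$ and $x_{2b-3}$ sit relative to $T(\seq{x})=q(x_1,x_{2b-1})\cdot 2^{f(x_1,x_{2b-1})-1}$): $B$ splits into $2b-6$ independent classes, and $A\cong C$ each split into $2^{2b-4}$ classes mapping homomorphically, giving $2\cdot 2^{2b-4}=2^{2b-3}$. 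The independence of the $B_i$ hinges on showing $T(\seq{y})=T(\seq{x})$ for adjacent $\seq{x},\seq{y}\in B$, which again depends crucially on using the global scale $f(x_1,x_{2b-1})$; it is not a consequence of $f(a,b)\ne f(b,c)$.
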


\begin{proof} 
For any vertex $\seq{x}=(x_1, x_2, \ldots, x_{2b-1})$ of $G_b(2^n)$ we use the abbreviations  
\begin{align*}
f(\seq{x})   &=f(x_1, x_{2b-1})\,, \\
q(\seq{x})   &=q(x_1, x_{2b-1})\,, \\
T^-(\seq{x}) &=\bl q(\seq{x})-1\br \cdot 2^{f(\,\seq{x}\,)-1}\,, \\
T(\seq{x})   &=q(\seq{x})\cdot 2^{f(\,\seq{x}\,)-1}\,, \\
\text{ and } \quad
T^+(\seq{x}) &=\bl q(\seq{x})+1\br \cdot 2^{f(\,\seq{x}\,)-1}\,.
\end{align*}
Recall that by Lemma~\ref{lem:f} we have 
\begin{equation}\label{eq:TTT}
T^-(\seq{x})<x_1\le T(\seq{x})<x_{2b-1}\le T^+(\seq{x})
\end{equation}
for any such vertex~$\seq{x}$ 
and in the first steps of the current proof we will distinguish these vertices according
to the position of their other entries $x_i$ with respect to $T(\seq{x})$. To begin with, we
partition $V_b(2^n)$ into three sets,
\begin{equation}\label{eq:partGbn}
V_b(2^n)=A\cup B\cup C\,,
\end{equation}
that are defined by 
\begin{align*}
A& =\bigl\{\seq{x}=(x_1, x_2, \ldots, x_{2b-1}) \in V_b(2^n) \,|\,
 x_{2b-3}\le T(\seq{x})  \bigr\}\,, \\
B& =\bigl\{\seq{x}=(x_1, x_2, \ldots, x_{2b-1}) \in V_b(2^n) \,|\,
 x_{3}\le T(\seq{x}) < x_{2b-3}  \bigr\}\,, \\
\text{ and } \quad 
C& =\bigl\{\seq{x}=(x_1, x_2, \ldots, x_{2b-1}) \in V_b(2^n) \,|\,
 T(\seq{x})< x_{3}  \bigr\}\,. \\
\end{align*}
Again we identify subsets of $V_b(2^n)$ with the corresponding induced subgraphs 
of $G_b(2^n)$. We will use different colours for these three sets and commence 
by colouring $B$. This set may be partitioned further into
\[
B=B_3\cup B_4\cup \ldots \cup B_{2b-4}\,,
\]
where
\[
B_i=\bigl\{\seq{x}=(x_1, x_2, \ldots, x_{2b-1}) \in V_b(2^n) \,|\,
 x_{i}\le T(\seq{x}) < x_{i+1}  \bigr\}
\]
for any integer index $i\in [3, 2b-4]$. We claim that each of these $2b-6$ sets is independent.
To show this suppose that $\{\seq{x},\seq{y}\}$ was an edge of $G_b(2^n)$ with 
$\seq{x},\seq{y}\in B_i$ for some $i\in [3, 2b-4]$. Let the notation be as in
Definition~\ref{dfn:Gbn}. By $\seq{x}\in B$, inequality~\ref{it:Gbn1} from 
Definition~\ref{dfn:Gbn}, and by~\eqref{eq:TTT} we have
\[
T^-(\seq{x})<x_1 <  y_1\le x_3 \le T(\seq{x})
<x_{2b-3}  <  y_{2b-3}\le x_{2b-1} \le T^+(\seq{x})\,,
\]
whence $f(y_1, y_{2b-3})=f(\seq{x})$ and $q(y_1, y_{2b-3})=q(\seq{x})$. Due to $\seq{y}\in B$
this yields $f(\seq{y})=f(\seq{x})$ and $q(\seq{y})=q(\seq{x})$. For this reason 
$\seq{x}, \seq{y}\in B_i$ implies $y_i\le T(\seq{y})= T(\seq{x})< x_{i+1}$, 
contrary to part~\ref{it:Gbn2} from Definition~\ref{dfn:Gbn}. So the sets $B_i$ are indeed independent and we obtain
\begin{equation}\label{eq:chiB}
\chi(B)\le 2b-6\,.
\end{equation}
This accounts for the summand $2b-6$ on the right-hand side of our claim and we may proceed 
with analysing $A$ and $C$. Using Fact~\ref{fact:f-reflect} it is not hard to check that
the map $\eta$ from Fact~\ref{fact:eta} constitutes an isomorphism between $A$ and $C$,
wherefore
\begin{equation}\label{eq:chiC}
\chi(A)=\chi(C)\,.
\end{equation}
Now by~\eqref{eq:partGbn},~\eqref{eq:chiB}, and~\eqref{eq:chiC} we have 
\[
\chi\bl G_b(2^n)\br\le \chi(A)+\chi(B)+\chi(C)\le (2b-6)+2\,\chi(A)
\]
and thus to finish the 
current proof we just need to show
\begin{equation}\label{eq:chiA}
\chi(A)\le 2^{2b-4}\,\chi\bl G_{b-1}(n)\br\,.
\end{equation}

The main idea for proving this is to split $A$ into at most~$2^{2b-4}$ further sets, 
each of which is either independent or has the property of being homomorphically mapped 
into $G_{b-1}(n)$ by a certain function $\phi$ that is to be introduced next.
Observe that by the first statement from Fact~\ref{fact:f-reflect} and by Lemma~\ref{lem:fmono}
there is a map $\phi\colon A\longrightarrow W_{b-1}(n)$ defined by
\[
\phi(x_1, x_2, \ldots, x_{2b-1})=\bl f(x_1, x_3), f(x_1, x_4), \ldots, f(x_1, x_{2b-1})\br
\]
for any $(x_1, x_2, \ldots, x_{2b-1})\in A$. 
 
We call two vertices $\seq{x}=(x_1, \ldots, x_{2b-1})$ and 
$\seq{y}=(y_1, \ldots, y_{2b-1})$ from $A$ equivalent and write $\seq{x}\sim \seq{y}$
if for every integer $i\in [3, 2b-2]$ we have 
\[
f(x_1, x_i)=f(x_1, x_{i+1}) \,\, \iff \,\, f(y_1, y_i)=f(y_1, y_{i+1})\,.
\]
It is plain that equivalence is an equivalence relation 
and that the number of its equivalence classes is at most $2^{2b-4}$. 
Thus to conclude the proof of~\eqref{eq:chiA} we just need to verify the following statement:
\begin{equation}\label{eq:homo}
\text{If } \seq{x}, \seq{y}\in A,\, \seq{x}\sim \seq{y},\,
\text{and } \{\seq{x},\seq{y}\}\in E\bl G_b(2^n)\br, \,
\text{then } \{\phi(\seq{x}), \phi(\seq{y})\}\in E\bl G_{b-1}(n)\br\,.
\end{equation}

So let any two equivalent vertices $\seq{x}$ and $\seq{y}$ from $A$ be given and suppose that
they are connected by an edge of $G_b(2^n)$, the notation for this being as in 
Definition~\ref{dfn:Gbn}. For any $i\in [2b-3]$ we set 
\begin{equation}\label{eq:alpha}
\alpha_i=f(x_1, x_{i+2})
\quad \text{and} \quad
\beta_i=f(x_1, y_{i+2})\,.
\end{equation}

Notice that there is no misprint in the last formula -- it is true that 
$\beta_i=f(y_1, y_{i+2})$ holds as well, and actually this fact is very relevant to our 
main concern, but it will only be shown at a rather late moment of our argument. 

Combining the assumption that $\{\seq{x},\seq{y}\}$ be an edge of $G_b(2^n)$ 
with Lemma~\ref{lem:fmono} we infer
\begin{equation}\label{eq:weakodd}
\alpha_1\le \beta_1\le \alpha_3\le \beta_3\le\ldots\le \alpha_{2b-3}\le \beta_{2b-3}
\end{equation}
as well as
\begin{equation}\label{eq:sec}
\alpha_{j+1}\le \beta_j \text{ for } j\in[2b-4]\,.
\end{equation}
 
Next we would like to show
\begin{equation}\label{eq:last}
\alpha_{2b-3}<\beta_{2b-3}\,.
\end{equation}
Assume contrariwise that $\alpha_{2b-3}=\beta_{2b-3}$, i.e., $f(\seq{x})=f(x_1, y_{2b-1})$. 
Lemma~\ref{lem:standard} yields 
\[
	T^-(\seq{x})<x_1\le T(\seq{x})< x_{2b-1}<y_{2b-1}\le T^+(\seq{x})\,,
\]
so in combination with $\{\seq{x},\seq{y}\}$ being an edge and with $\seq{x}\in A$ we obtain
\[
	T^-(\seq{x})<x_1<y_1\le x_{2b-3}\le T(\seq{x})< x_{2b-1}
	\le y_{2b-2}\le y_{2b-1}\le T^+(\seq{x})\,.
\]
It follows that $T(\seq{y})=T(\seq{x})$ and $f(y_1, y_{2b-2})=f(y_1, y_{2b-1})=f(\seq{x})$.
Using $\seq{x}\sim\seq{y}$ we may deduce $f(x_1, x_{2b-2})=f(x_1, x_{2b-1})$.
Now Lemma~\ref{lem:standard} shows that $q(x_1, x_{2b-2})=q(x_1, x_{2b-1})$
holds as well and consequently we have $T(\seq{x})< x_{2b-2}\le y_{2b-3}$. 
Thus we get a contradiction to 
$\seq{y}\in A$, whereby~\eqref{eq:last} is proved.

Extending this result we contend that more generally we have
\begin{equation}\label{eq:oddsharp}
\alpha_{i}<\beta_{i} \quad \text{for all } i\in [2b-3]\,.
\end{equation}
Arguing indirectly again, we let $i$ denote the largest counterexample to this claim.
Notice that~\eqref{eq:last} tells us $i\le 2b-4$. Set $q=q(x_1, x_{i+2})$, 
$T^-=(q-1)\cdot 2^{\alpha_{i}-1}$, $T=q\cdot 2^{\alpha_{i}-1}$, and 
$T^+=(q+1)\cdot 2^{\alpha_{i}-1}$. Due to Lemma~\ref{lem:standard} our indirect assumption 
$\alpha_{i}=\beta_{i}$
entails
\[
T^-<x_1\le T<x_{i+2}\le y_{i+2}\le T^+\,,
\]
which together with $x_{i+2}\le x_{i+3}\le y_{i+2}$ shows 
$f(x_1, x_{i+2})=f(x_1, x_{i+3})$. Now~$\seq{x}\sim\seq{y}$ discloses
$f(y_1, y_{i+2})=f(y_1, y_{i+3})$ and by Lemma~\ref{lem:strange} it follows that
$f(x_1, y_{i+2})=f(x_1, y_{i+3})$. Using Lemma~\ref{lem:standard} again we obtain
\[
T^-<x_1\le T<x_{i+3}\le y_{i+3}\le T^+
\]
and thus $\alpha_{i+1}=\beta_{i+1}$, contrary to the maximality of~$i$. 
Thereby~\eqref{eq:oddsharp} is proved as well.

Now we are ready to confirm the alternative definition of $\beta_i$ announced above.
That is, for any $i\in [2b-3]$ we claim
\begin{equation}\label{eq:beta}
\beta_i=f(x_1, y_{i+2})=f(y_1, y_{i+2}) \,.
\end{equation}
To see this, set $q=q(x_1, y_{i+2})$, 
$S^-=(q-1)\cdot 2^{\beta_i-1}$, $S=q\cdot 2^{\beta_i-1}$, and 
$S^+=(q+1)\cdot 2^{\beta_i-1}$. Now
\[
S^-<x_1\le S<y_{i+2}\le S^+
\]
and $x_3<y_3\le y_{i+2}$. Hence $S<x_3$ would entail 
\[
S^-<x_1\le S<x_3\le S^+
\]
and, consequently, $\alpha_1=f(x_1, x_3)=\beta_i\ge \beta_1$, which contradicts the  
case $i=1$ of~\eqref{eq:oddsharp}. This proves $x_1<y_1\le x_3\le S$,
which in turn establishes~\eqref{eq:beta}.

Putting everything together, the equations~\eqref{eq:alpha} and~\eqref{eq:beta}
yield
\[
\phi(\seq{x})=(\alpha_1, \alpha_2, \ldots, \alpha_{2b-3}) 
\quad \text{ and } \quad
\phi(\seq{y})=(\beta_1, \beta_2, \ldots, \beta_{2b-3}) 
\]
and by~\eqref{eq:oddsharp} we may strengthen~\eqref{eq:weakodd} to
\[
\alpha_1< \beta_1\le \alpha_3< \beta_3\le\ldots\le \alpha_{2b-3}< \beta_{2b-3}\,.
\]
In particular, this shows that $\phi(\seq{x})$ and $\phi(\seq{y})$ are indeed vertices
of $G_{b-1}(n)$ and together with~\eqref{eq:sec} it further shows that these two vertices 
are adjacent. This concludes the proof of~\eqref{eq:homo} and, hence, the proof of 
Proposition~\ref{prop:reduce}.
\end{proof}

To summarise, we would like to emphasise again that~\eqref{eq:Gb1}, Lemma~\ref{lem:G2n}
and Proposition~\ref{prop:reduce} taken together yield an easy proof of 
Theorem~\ref{thm:upperG} by induction on $b$.
Besides, the combination of Proposition~\ref{prop:low}, 
Theorem~\ref{thm:homo}, and Theorem~\ref{thm:upperG} implies 
Theorem~\ref{thm:main-a}.
 
\section{Reducible types}
\label{sec:prod}

Having thus said everything we want to say about the chromatic number of 
irreducible type-graphs, we devote the present section to the proof of 
Theorem~\ref{thm:main-b}. So we consider 
any nontrivial type $\tau$ and let $\tau=\rho_1\rho_2\cdot\ldots\cdot\rho_t$ be 
its factorisation into irreducible types. For each $i\in[t]$ the number of blocks into 
which $\rho_i$ decomposes is denoted by $b_i$ and we set $b^*=\max(b_1, \ldots, b_t)$.
Finally, let $k$ be the width of $\tau$ and let $\rho_i$ have width $k_i$ for $i\in [t]$.

The notation introduced up to this moment will be used throughout this section without 
being repeated in the numbered statements that will occur.

Recall that our goal is to show
\begin{equation*}
\chi\bl G(n,\tau)\br= \Theta\bl\log_{(b^*-2)}(n)\br\,.
\end{equation*}
Here we have $b^*\ge 2$ because otherwise each factor $\rho_i$ of $\tau$ would have to be 
equal to~$3$, meaning that $\tau$ were trivial. Again we treat the lower bound and the 
upper bound separately, but this time the latter is easier, so we start with it. 

\begin{fact}
For every $i\in [t]$ and every integer $n\ge k$ there is a graph homomorphism
\[
\phi_i\colon G(n,\tau) \longrightarrow G(n, \rho_{i})\,.
\]
\end{fact}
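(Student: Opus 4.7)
The plan is to define $\phi_i$ by extracting a specific consecutive ``slice'' of the ordered elements of the input set. The factorisation $\tau = \rho_1 \cdots \rho_t$ induces a partition of the widths $k = k_1 + \cdots + k_t$; setting $K_i = k_1 + \cdots + k_{i-1}$ (with $K_1 = 0$, so that $K_i + k_i = K_{i+1}$), I will define, for each $X = \{x_1 < x_2 < \cdots < x_k\} \in \binom{[n]}{k}$,
\[
\phi_i(X) = \{x_{K_i + 1}, x_{K_i + 2}, \ldots, x_{K_i + k_i}\}\,.
\]

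To check that $\phi_i$ is a graph homomorphism, consider any edge $\{X, Y\}$ of $G(n, \tau)$; by the symmetry built into Definition~\ref{dfn:typegraph} we may assume that $\tau(X, Y) = \tau$. Writing $X \cup Y = \{z_1 < \cdots < z_\ell\}$ and $m_j = |\rho_1| + \cdots + |\rho_j|$, the key observation, which is exactly the standard characterisation of the factorisation into irreducibles recorded right after Definition~5 (``starting a new factor at every moment where these two numbers are equal''), is that for each $j \in [t]$ the prefix $\tau_1 \cdots \tau_{m_j}$ is itself a type of width $K_{j+1}$, whence
\[
|X \cap \{z_1, \ldots, z_{m_j}\}| = |Y \cap \{z_1, \ldots, z_{m_j}\}| = K_{j+1}\,.
\]
Consequently the slice $\phi_i(X)$ coincides with $X \cap \{z_{m_{i-1}+1}, \ldots, z_{m_i}\}$ and analogously $\phi_i(Y) = Y \cap \{z_{m_{i-1}+1}, \ldots, z_{m_i}\}$. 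The pair $(\phi_i(X), \phi_i(Y))$ therefore realises precisely the middle chunk $\tau_{m_{i-1}+1} \cdots \tau_{m_i} = \rho_i$ of $\tau$. Since $\rho_i$ is nontrivial (it contains at least one $1$ and one $2$), $\phi_i(X) \ne \phi_i(Y)$ and $\{\phi_i(X), \phi_i(Y)\}$ is indeed an edge of $G(n, \rho_i)$.

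I anticipate no serious obstacle: the content is essentially a careful unpacking of how the irreducible factorisation of $\tau$ cuts the interleaved pair $(X,Y)$ into independent consecutive pieces $(X_j, Y_j)$, each realising $\rho_j$, with the cuts lying exactly at the positions of $X\cup Y$ where the left-counts of $X$-elements and $Y$-elements first coincide. The only mild point worth flagging is that, as the graph $G(n, \rho_i)$ is only defined for nontrivial $\rho_i$, the statement is implicitly restricted to those indices $i$ with $\rho_i \ne 3$; this is harmless for the subsequent application to Theorem~\ref{thm:main-b}, where only factors with $b_i \ge 2$ are relevant.
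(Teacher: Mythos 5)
Your proposal is correct and uses the same map as the paper: since $\textbf{1}(\rho_j)=k_j$, your indices $K_i, K_i+k_i$ coincide exactly with the paper's $r=\textbf{1}(\rho_1\cdots\rho_{i-1})$ and $s=\textbf{1}(\rho_1\cdots\rho_i)$, and the verification you spell out is just the "easy to confirm" step the paper leaves implicit. Your flag about trivial factors $\rho_i=3$ is also accurate (Definition~\ref{dfn:typegraph} requires a nontrivial type), though harmless in context since only indices with $b_i=b^*\ge 2$ are used.
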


\begin{proof} 
Set $r=\textbf{1}(\rho_1\cdot\ldots\cdot\rho_{i-1})$
and $s=\textbf{1}(\rho_1\cdot\ldots\cdot\rho_{i})$. Clearly $\rho_i$ has width $k_i=s-r$,
and, since $\rho_1, \ldots, \rho_i$ are types, we also have 
$r=\textbf{2}(\rho_1\cdot\ldots\cdot\rho_{i-1})$ and 
$s=\textbf{2}(\rho_1\cdot\ldots\cdot\rho_{i})$. Now it easy to confirm that the map
\[
\phi_i\colon \binom{[n]}{k} \longrightarrow \binom{[n]}{k_i}
\]
given by 
\[
\phi\bl\{x_1, \ldots, x_{k}\}\br=
\{x_{r+1}, \ldots, x_s\}
\]
whenever $1\le x_1<x_2<\ldots<x_k\le n$ is as desired.
\end{proof} 

Applying this, in particular, to some index $i^*\in [t]$ with $b_{i^*}=b^*$ we may deduce the
following by means of Theorem~\ref{thm:main-a}.

\begin{fact}
\label{fact:upp}
As $n$ tends to infinity we have
\begin{equation}
\label{eq:52}
\chi\bl G(n,\tau)\br\le \bl 2^{(b^*-2)^2}+o(1)\br\log_{(b^*-2)}(n)\,. 
\end{equation}
\end{fact}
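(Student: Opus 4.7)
The plan is to combine the homomorphism from the preceding (unnumbered) fact with the upper bound on irreducible types already established in Theorem~\ref{thm:main-a}. First I would pick an index $i^*\in[t]$ with $b_{i^*}=b^*$; such an index exists by the definition of $b^*$ as a maximum. The key observation is that the irreducible factor $\rho_{i^*}$ is genuinely nontrivial: if it were trivial we would have $\rho_{i^*}=3$, so $b_{i^*}=1$, contradicting $b^*\ge 2$ (which holds because $\tau$ itself is nontrivial, as noted in the passage immediately before the statement).

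Once $i^*$ is chosen, I would invoke the preceding fact to obtain a graph homomorphism
\[
\phi_{i^*}\colon G(n,\tau)\longrightarrow G(n,\rho_{i^*})\,.
\]
Since the existence of a graph homomorphism $H_1\to H_2$ implies $\chi(H_1)\le \chi(H_2)$, this immediately yields
\[
\chi\bl G(n,\tau)\br\le \chi\bl G(n,\rho_{i^*})\br\,.
\]

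Finally, $\rho_{i^*}$ is a nontrivial irreducible type with exactly $b^*$ blocks, so the upper bound from Theorem~\ref{thm:main-a} applies directly and gives
\[
\chi\bl G(n,\rho_{i^*})\br\le \bl 2^{(b^*-2)^2}+o(1)\br\log_{(b^*-2)}(n)\,
\]
as $n\to\infty$. Chaining the two inequalities yields~\eqref{eq:52}.

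There is no real obstacle here; the argument is a one-line deduction once the preceding homomorphism fact and Theorem~\ref{thm:main-a} are in hand. The only thing to be careful about is verifying that the chosen $\rho_{i^*}$ meets the hypotheses of Theorem~\ref{thm:main-a} (nontriviality and irreducibility with the correct block count), which is precisely what the ``$b^*\ge 2$'' observation guarantees.
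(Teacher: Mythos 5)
Your proof is correct and follows exactly the same route the paper takes: choose $i^*$ with $b_{i^*}=b^*$, compose with the homomorphism $\phi_{i^*}\colon G(n,\tau)\into G(n,\rho_{i^*})$ from the preceding fact, and then apply the upper bound of Theorem~\ref{thm:main-a} to the nontrivial irreducible type $\rho_{i^*}$. You have merely made explicit the check (harmless but worth noting) that $\rho_{i^*}$ is nontrivial, which the paper leaves implicit.
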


In the other direction, we will use Proposition~\ref{prop:low} to embed the generalised shift
graph $G\bl n, \sigma_{b^*-1}\br$ homomorphically into $G(kn, \tau)$. 

\begin{fact}
\label{fact:low}
For every integer $n\ge b^*$ there is a graph homomorphism
\[
\psi\colon G(n, \sigma_{b^*-1}) \longrightarrow G(kn, \tau)
\]
and, consequently, we have
\begin{equation}
\label{eq:final}
(1+o(1))\log_{(b^*-2)}\bl\tfrac nk\br\le \chi\bl G(n,\tau)\br\,.
\end{equation}
\end{fact}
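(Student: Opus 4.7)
My plan is to construct $\psi$ by gluing together $t$ sub-homomorphisms, one for each irreducible factor $\rho_j$ of $\tau$. Fix an index $i^* \in [t]$ with $b_{i^*} = b^*$. The key observation is that, because $b_j \le b^*$ for every $j$, the map $\pi_j$ that sends a $(b^*-1)$-subset of $[n]$ to its $b_j - 1$ smallest elements is a graph homomorphism $G(n, \sigma_{b^*-1}) \to G(n, \sigma_{b_j-1})$ whenever $b_j \ge 2$. Indeed, for an edge $\bigl\{\{h_1, \ldots, h_{b^*-1}\}, \{h_2, \ldots, h_{b^*}\}\bigr\}$ with $h_1 < \ldots < h_{b^*}$, the images $\{h_1, \ldots, h_{b_j-1}\}$ and $\{h_2, \ldots, h_{b_j}\}$ realise the pattern $\sigma_{b_j-1}$. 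Composing $\pi_j$ with the homomorphism supplied by Proposition~\ref{prop:low} applied to $\rho_j$ then gives a map $\psi_j \colon G(n, \sigma_{b^*-1}) \to G(k_j n, \rho_j)$. For the remaining case of trivial factors $\rho_j = 3$ (where $k_j = b_j = 1$), I would instead define $\psi_j$ to be the constant map onto the singleton $\{1\}$.

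Next I would assemble $\psi \colon \binom{[n]}{b^*-1} \to \binom{[kn]}{k}$ by embedding each $\psi_j(X)$ into the disjoint consecutive interval $(s_j, s_j + k_j n] \subseteq [kn]$, where $s_j = (k_1 + \cdots + k_{j-1})n$. Because these $t$ intervals are non-overlapping and appear in the right order within $[kn]$, the type of any pair $(\psi(X), \psi(Y))$ factors as the concatenation of the types realised on each interval. For an arbitrary edge $\{X, Y\}$ of $G(n, \sigma_{b^*-1})$, each nontrivial factor $j$ contributes $\tau(\psi_j(X), \psi_j(Y)) = \rho_j$ by construction, while each trivial factor contributes a single $3 = \rho_j$; concatenating yields $\tau(\psi(X), \psi(Y)) = \rho_1 \rho_2 \cdots \rho_t = \tau$, so $\psi$ is the desired homomorphism $G(n, \sigma_{b^*-1}) \to G(kn, \tau)$.

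The inequality~\eqref{eq:final} then drops out: $\chi\bl G(kn, \tau)\br \ge \chi\bl G(n, \sigma_{b^*-1})\br = (1 + o(1)) \log_{(b^*-2)}(n)$ by Theorem~\ref{thm:EH}, and the asserted bound comes from replacing $n$ with $\lfloor n/k \rfloor$. The edge case $b^* = 2$, where Theorem~\ref{thm:EH} does not formally apply, can be handled separately using $\sigma_1 = 12$ and $G(n, 12) = K_n$, which has chromatic number $n = \log_{(0)}(n)$ in agreement with the formula. The only real work lies in checking the two homomorphism properties above, and each is a direct bookkeeping exercise; the point to watch is that the smaller-maximum set in the shift-graph edge is consistently routed by $\pi_j$ to the smaller-maximum set in $G(n, \sigma_{b_j-1})$, which is precisely the first argument in Proposition~\ref{prop:low}'s homomorphism, so that the composition produces $\rho_j$ rather than its reverse $\rho_j'$.
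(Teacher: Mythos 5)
Your proof is correct and follows essentially the same strategy as the paper: project each $(b^*-1)$-set to its $b_j-1$ smallest elements, push through the Proposition~\ref{prop:low} homomorphism for each nontrivial $\rho_j$, send trivial factors to a fixed singleton, and shift the pieces into disjoint consecutive intervals of $[kn]$ so that the order types concatenate to give $\tau$. The only cosmetic difference is your choice of $\{1\}$ rather than the interval's maximum for trivial factors, which changes nothing, and you helpfully make explicit the $b^*=2$ edge case that the paper leaves implicit.
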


\begin{proof} 
Let $I=\{i\in [t]\,|\,\rho_i\ne 3\}$ and write $c_i=\sum_{j=1}^ik_j$ for every integer 
$i\in [0, t]$. Recall that we know from Proposition~\ref{prop:low} that for every index 
$i\in I$ there exists a homomorphism 
$\psi_i\colon G(n, \sigma_{b_i-1}) \longrightarrow G(k_in, \rho_i)$.
Utilising these, we define for each $i\in [t]$ a map
\[
\widehat{\psi}_i\colon \binom{[n]}{b^*-1} \longrightarrow \binom{[c_{i-1}n+1, c_in]}{k_i}
\]
by stipulating
\[
\widehat{\psi}_i\bl\{h_1, \ldots, h_{b^*-1}\}\br=
\begin{cases}
c_{i-1}n+\psi_i\bl\{h_1, \ldots, h_{b_i-1}\}\br &\textnormal{ if } i\in I, \cr
\{c_in\} &\textnormal{ if } i\not\in I\,,\cr
\end{cases}
\]
whenever $1\le h_1<\ldots<h_{b-1}\le n$, where the addition of a number to a set in the upper case is to be performed ``elementwise''. We leave it to the reader to check that the map
\[
\psi\colon \binom{[n]}{b^*-1} \longrightarrow \binom{[kn]}{k}
\]
given by 
\[
\psi(X)=\bigcup_{i\in [t]}\widehat{\psi}_i(X)
\]
for all $X\in \binom{[n]}{b^*-1}$ is indeed a homomorphism from $G(n, \sigma_{b^*-1})$ to
$G(kn, \tau)$. 

Formula~\eqref{eq:final} follows from the mere existence of $\psi$ and from 
Theorem~\ref{thm:EH}.
\end{proof}

Owing to ~\eqref{eq:52} and \eqref{eq:final} the proof of Theorem~\ref{thm:main-b} is 
complete.
 
\begin{bibdiv}
\begin{biblist}

\bib{ALR}{article}{
   author={Avart, Christian},
   author={{\L}uczak, Tomasz},
   author={R{\"o}dl, Vojt{\v{e}}ch},
   title={On generalized shift graphs},
   journal={Fund. Math.},
   volume={226},
   date={2014},
   number={2},
   pages={173--199},
   issn={0016-2736},
   review={\MR{3224120}},
   doi={10.4064/fm226-2-6},
}

\bib{DLR95}{article}{
   author={Duffus, Dwight},
   author={Lefmann, Hanno},
   author={R{\"o}dl, Vojt{\v{e}}ch},
   title={Shift graphs and lower bounds on Ramsey numbers $r_k(l;r)$},
   journal={Discrete Math.},
   volume={137},
   date={1995},
   number={1-3},
   pages={177--187},
   issn={0012-365X},
   review={\MR{1312451 (95j:05136)}},
   doi={10.1016/0012-365X(93)E0139-U},
}

\bib{EH66a}{article}{
   author={Erd{\H{o}}s, P.},
   author={Hajnal, A.},
   title={On chromatic number of infinite graphs},
   conference={
      title={Theory of Graphs},
      address={Proc. Colloq., Tihany},
      date={1966},
   },
   book={
      publisher={Academic Press, New York},
   },
   date={1968},
   pages={83--98},
   review={\MR{0263693 (41 \#8294)}},
}

\bib{ER60}{article}{
   author={Erd{\H{o}}s, P.},
   author={Rado, R.},
   title={A construction of graphs without triangles having preassigned
   order and chromatic number},
   journal={J. London Math. Soc.},
   volume={35},
   date={1960},
   pages={445--448},
   issn={0024-6107},
   review={\MR{0140433 (25 \#3853)}},
}
	
\bib{KS05}{article}{
   author={Komj{\'a}th, P{\'e}ter},
   author={Shelah, Saharon},
   title={Finite subgraphs of uncountably chromatic graphs},
   journal={J. Graph Theory},
   volume={49},
   date={2005},
   number={1},
   pages={28--38},
   issn={0364-9024},
   review={\MR{2130468 (2005k:05096)}},
   doi={10.1002/jgt.20060},
}

\bib{Pr86}{article}{
   author={Preiss, David},
   author={R{\"o}dl, Vojt{\v{e}}ch},
   title={Note on decomposition of spheres in Hilbert spaces},
   journal={J. Combin. Theory Ser. A},
   volume={43},
   date={1986},
   number={1},
   pages={38--44},
   issn={0097-3165},
   review={\MR{859294 (87k:05083)}},
   doi={10.1016/0097-3165(86)90020-8},
}

\bib{MSW15}{article}{
   author={Milans, Kevin G.},
   author={Stolee, Derrick},
   author={West, Douglas B.},
   title={Ordered Ramsey theory and track representations of graphs},
   journal={J. Comb.},
   volume={6},
   date={2015},
   number={4},
   pages={445--456},
   issn={2156-3527},
   review={\MR{3382604}},
   doi={10.4310/JOC.2015.v6.n4.a3},
}
	
\bib{NR76}{article}{
   author={Ne{\v{s}}et{\v{r}}il, Jaroslav},
   author={R{\"o}dl, Vojt{\v{e}}ch},
   title={The Ramsey property for graphs with forbidden complete subgraphs},
   journal={J. Combinatorial Theory Ser. B},
   volume={20},
   date={1976},
   number={3},
   pages={243--249},
   review={\MR{0412004 (54 \#133)}},
}	

\bib{Sp57}{article}{
   author={Specker, Ernst},
   title={Teilmengen von Mengen mit Relationen},
   language={German},
   journal={Comment. Math. Helv.},
   volume={31},
   date={1957},
   pages={302--314},
   issn={0010-2571},
   review={\MR{0088454 (19,521b)}},
}
\end{biblist}
\end{bibdiv}

\end{document}